\newcommand{\dsp}{\displaystyle}
\newcommand{\om}{\omega}
\newcommand{\Om}{\Omega}
\newcommand{\mrm}[1]{\mathrm{#1}}
\newcommand{\Cplx}{\mathbb{C}}
\newcommand{\R}{\mathbb{R}}
\newcommand{\mH}{\mrm{H}}
\newcommand{\loc}{\mbox{\scriptsize loc}}
\newcommand{\rcoef}{\mathcal{R}}
\newcommand{\tcoef}{\mathcal{T}} 
\newcommand{\rN}{r}
\newcommand{\rD}{R}
\newtheorem{theorem}{Theorem}[section]
\newtheorem{lemma}{Lemma}[section]
\newtheorem{remark}{Remark}[section]
\newtheorem{proposition}{Proposition}[section]
\begin{document}

~\vspace{0.0cm}
\begin{center}
{\sc \bf\LARGE  Simple examples of perfectly invisible \\[6pt] and trapped modes in waveguides}
\end{center}

\begin{center}
\textsc{Lucas Chesnel}$^1$, \textsc{Vincent Pagneux}$^2$\\[16pt]
\begin{minipage}{0.96\textwidth}
{\small
$^1$ INRIA/Centre de math\'ematiques appliqu\'ees, \'Ecole Polytechnique, Universit\'e Paris-Saclay, Route de Saclay, 91128 Palaiseau, France;\\
$^2$ Laboratoire d'Acoustique de l'Universit\'e du Maine, Av. Olivier Messiaen, 72085 Le Mans, France.\\[10pt]
E-mails: \texttt{lucas.chesnel@inria.fr}, \texttt{vincent.pagneux@univ-lemans.fr}\\[-12pt]
\begin{center}
-- \today --
\end{center}
}
\end{minipage}
\end{center}
\vspace{0.4cm}

\noindent\textbf{Abstract.} We consider the propagation of waves in a waveguide with Neumann boundary conditions. We work at low wavenumber with only one propagating mode in the leads, all the other modes being evanescent. We assume that the waveguide is symmetric with respect to an axis orthogonal to the longitudinal direction and is endowed with a branch of height $L$ whose width coincides with the wavelength of the propagating modes. In this setting, tuning the parameter $L$, we prove the existence of simple geometries where the transmission coefficient is equal to one (perfect invisibility). We also show that these geometries, for possibly different values of $L$, support so called trapped modes (non zero solutions of finite energy of the homogeneous problem) associated with eigenvalues embedded in the continuous spectrum.\\

\noindent\textbf{Key words.} Waveguides, invisibility, trapped modes, scattering matrix, asymptotic analysis.

\section{Introduction}

In this work, we consider a problem of wave propagation, governed by the Helmholtz equation, in a waveguide unbounded in one direction, the longitudinal direction $(Ox)$, in frequency regime. We supplement it with Neumann boundary conditions. Such a problem appears for example in the theory of water-waves, in acoustics or in electromagnetism. We shall assume that the wavenumber $k$ is sufficiently small so that only one mode (the piston mode) propagates. We will be particularly interested in the scattering of the piston mode coming from $-\infty$ by the structure. To describe such a process, we introduce two complex coefficients, the so-called \textit{reflection} and \textit{transmission} coefficients, denoted $\rcoef$ and $\tcoef$, such that $\rcoef$ (resp. $\tcoef-1$) corresponds to the amplitude of the scattered field at $x=-\infty$ (resp. $x=+\infty$) (see (\ref{DefScatteringCoeff})). According to the conservation of energy, we have
\begin{equation}\label{NRJconservation}
|\rcoef|^2+|\tcoef|^2=1.
\end{equation}
In the first part of the article, we explain how to construct waveguides, different from the straight (reference) geometry, such that $\rcoef=0$, $\tcoef=1$. In this case, we shall say that the total field is a \textit{perfectly invisible mode}. The reason is that in this situation, the scattered field is exponentially decaying both at $\pm\infty$ and for an observer with measurement devices located far from the geometrical defect, everything happens like in the reference waveguide. In other words, the geometrical perturbation is invisible from far field measurements. The problem of imposing $\rcoef=0$, $\tcoef=1$ seems quite new in literature, at least when one looks for proofs. A simpler problem consists in finding \textit{non reflecting} geometries such that $\rcoef=0$ (and so $|\tcoef|=1$ according to (\ref{NRJconservation})). For such waveguides, all the energy is transmitted but there is a possible shift of phase for the field at $x=+\infty$. However, one can speak of backscattering invisibility. The latter problem has been investigated numerically ($|\rcoef|$ small) for example in \cite{PoNe14,EvMP14} for water wave problems and in \cite{AlSE08,EASE09,NgCH10,OuMP13,FuXC14}, with strategies based on the use of new ``zero-index'' and ``epsilon near zero'' metamaterials, in electromagnetism (see \cite{FlAl13} for an application to acoustics).\\
\newline
A perturbative approach, relying on the fact that $\rcoef=0$ in the straight geometry, has been proposed in \cite{BoNa13} to construct waveguides such that $\rcoef=0$, $|\tcoef|=1$ (see also \cite{BLMN15,ChNa16,BoCNSu,ChHS15} for  applications in related contexts). It has been adapted in \cite{BoCNSub} to solve the problem of imposing $\rcoef=0$, $\tcoef=1$. The technique, based on the proof of the implicit function theorem, allows one to design invisible perturbations which \textit{a priori} are small with respect to the wavelength. An alternative method has been developed in \cite{ChNPSu} to obtain larger invisible defects. In the latter work, it is explained how to get $\rcoef=0$, $|\tcoef|=1$ in waveguides that are symmetric with respect to the $(Oy)$ axis (perpendicular to the direction of propagation). The idea consists in using the symmetry properties and to play with a branch of finite height $L$ making $L\to+\infty$. In  \cite{ChNPSu}, it is also shown how to work with three branches to impose $\rcoef=0$, $\tcoef=1$. In the present article, we use a similar idea but, importantly, we work with only one branch whose width coincides with the wavelength of the incident wave. We provide examples of very simple geometries where $\rcoef=0$, $\tcoef=1$ (which is not obvious to attain in general).\\
\newline
In the second part of the paper, we will be concerned with so-called \textit{trapped modes}. We remind the reader that trapped modes are non zero solutions of the homogeneous problem (\ref{PbInitial}) (without source term) which are of finite energy. It has been known for a while that trapped modes play a key role in physical systems and they have been widely studied. In particular, literature concerning trapped modes is much more developed than that concerning invisible modes. We refer the reader for example to \cite{Urse51,Evan92,EvLV94,DaPa98,LiMc07,Naza10c,NaVi10,CPMP11,Pagn13}. More precisely, in our work, we will be interested in trapped modes associated with eigenvalues embedded in the continuous spectrum. Such trapped modes are also called \textit{Bound States in the Continuum} (BSCs or BICs) in quantum mechanics (see for example \cite{SaBR06,Mois09,GPRO10} as well as the recent review \cite{HZSJS16}). Such objects are difficult to observe. In particular they are unstable with respect to the geometry and a small perturbation transforms them in general into complex resonances \cite{AsPV00}. A classical method to construct trapped modes associated with eigenvalues embedded in the continuous spectrum consists in working with waveguides that are symmetric with respect to the $(Ox)$ axis \cite{Evan92,DaPa98,LiMc07,Pagn13}. This is interesting because it allows one to decouple symmetric and skew-symmetric modes. Then the idea is to design a defect such that trapped modes exist below the continuous spectrum for the problem with mixed boundary conditions satisfied by the skew-symmetric component of the field. Back to the original waveguide, this guarantees the existence of trapped modes. What we will do in the second part of the article is to propose a simple alternative mechanism to construct trapped modes associated with an eigenvalue embedded in the continuous spectrum. We emphasize that it will be based on symmetry arguments with respect to $(Oy)$ and not with respect to $(Ox)$. It will also involve the \textit{augmented scattering matrix}, a convenient tool introduced in \cite{NaPl94bis,KaNa02,Naza06,Naza11}, allowing one to detect the presence of trapped modes. \\
\newline
The outline is as follows. In Section \ref{SectionPerfectInvisibility}, we exhibit geometries admitting perfectly invisible modes. In Section \ref{SectionExistenceOfTrappedModes}, we provide examples of geometries supporting trapped modes. In Section \ref{SectionNumExpe}, we give numerical illustrations of the results showing also that other geometrical shapes can be considered. We end the paper with a short conclusion (Section \ref{SectionConclusion}) and an Annex where we give the proof of a classical result used in the analysis. The main results of this work are Theorem \ref{MainThmPart1} (existence of perfectly invisible modes) and Theorem \ref{MainThmPart2} (existence of trapped modes).

\section{Perfectly invisible modes}\label{SectionPerfectInvisibility}

\subsection{Setting}
~\vspace{-0.6cm}

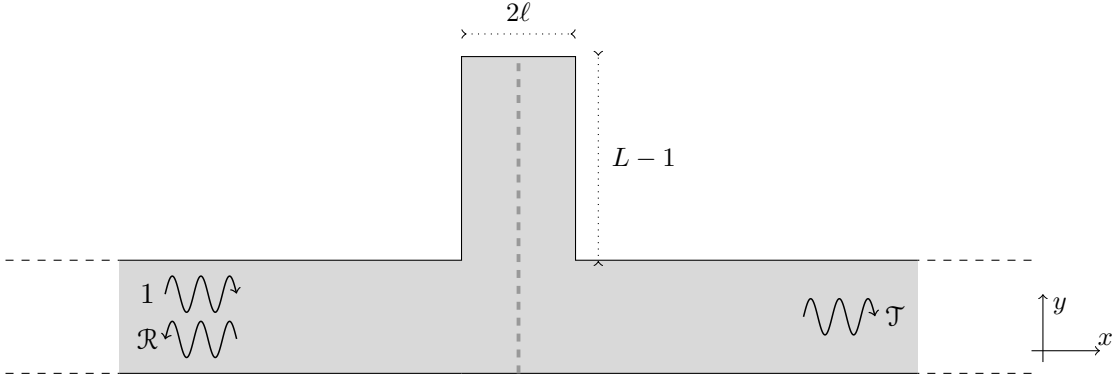
\begin{figure}[!ht]
\centering
\begin{tikzpicture}[scale=1.5]
\draw[fill=gray!30,draw=none](-2,1) rectangle (2,2);
\draw[fill=gray!30,draw=none](-5,1) rectangle (-2,2);
\draw[fill=gray!30,draw=none](-2,1.8) rectangle (-1,3.8);
\draw (-2,1)--(2,1); 
\draw (-5,2)--(-2,2)--(-2,3.8)--(-1,3.8)--(-1,2)--(2,2); 
\draw (-5,1)--(-2,1); 
\draw[dashed] (3,1)--(2,1); 
\draw[dashed] (3,2)--(2,2);
\draw[dashed] (-6,2)--(-5,2);
\draw[dashed] (-6,1)--(-5,1);
\draw[dotted,>-<] (-2,4)--(-1,4);
\draw[dotted,>-<] (-0.8,1.95)--(-0.8,3.85);
\node at (-1.5,4.2){\small $2\ell$};
\node at (-0.4,2.9){\small $L-1$};
\draw[dashed,line width=0.5mm,gray!80] (-1.5,1)--(-1.5,3.8);
\draw[->] (3,1.2)--(3.6,1.2);
\draw[->] (3.1,1.1)--(3.1,1.7);
\node at (3.65,1.3){\small $x$};
\node at (3.25,1.6){\small $y$};
\begin{scope}[xshift=-4.6cm,yshift=1.7cm,scale=0.8]
\draw[line width=0.2mm,->] plot[domain=0:pi/4,samples=100] (\x,{0.2*sin(20*\x r)}) node[anchor=west] {\hspace{-2.4cm}$1$};
\end{scope}
\begin{scope}[xshift=-4.6cm,yshift=1.3cm,scale=0.8]
\draw[line width=0.2mm,<-] plot[domain=0:pi/4,samples=100] (\x,{0.2*sin(20*\x r)}) node[anchor=west] {$\hspace{-2.4cm}\mathcal{R}$};
\end{scope}
\begin{scope}[xshift=1cm,yshift=1.5cm,scale=0.8]
\draw[line width=0.2mm,->] plot[domain=0:pi/4,samples=100] (\x,{0.2*sin(20*\x r)}) node[anchor=west] {$\mathcal{T}$};
\end{scope}
\end{tikzpicture}
\caption{Geometry of $\Om_L$. \label{DomainOriginal}} 
\end{figure}

\noindent Pick a wavenumber $k\in(0;\pi)$. Set $\ell=\pi/k$ and for $L>1$, define the waveguide (see Figure \ref{DomainOriginal})
\begin{equation}\label{defOriginalDomain}
\Om_L:=\{ (x,y)\in\R\times(0;1)\ \cup\  (-\ell;\ell)\times [1;L)\}. 
\end{equation}
The value $2\pi/k$ for the width of the vertical branch of $\Om_L$ is very important in the analysis we develop below as we will see later. For the main exposition, we will stick with this simple waveguide $\Om_L$. However the method we propose works in other geometries, such as the one described in (\ref{defDomainBis}) (see Figure \ref{figResultT1Gros})). We consider the Helmholtz problem with Neumann boundary conditions (sound hard walls in acoustics)
\begin{equation}\label{PbInitial}
\begin{array}{|rcll}
\Delta v + k^2 v & = & 0 & \mbox{ in }\Om_L\\[3pt]
 \partial_nv  & = & 0  & \mbox{ on }\partial\Om_L.
\end{array}
\end{equation}
In (\ref{PbInitial}), $\Delta$ denotes the $\mrm{2D}$ Laplace operator and $n$ refers to the outer unit normal vector to $\partial\Om_{L}$. Set
\[
w^{\pm}_{1}(x,y)=\cfrac{1}{\sqrt{2k}}\,e^{\mp i k x}\quad\mbox{ and }\quad  \hat{w}^{\pm}_{1}(x,y)=\cfrac{1}{\sqrt{2k}}\,e^{\pm i k x}.
\]

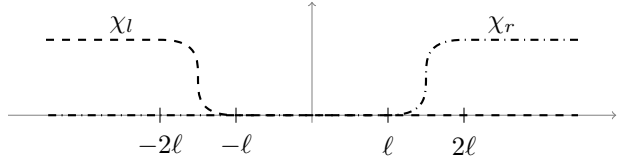
\begin{figure}[!ht]
\centering
\begin{tikzpicture}[scale=1]
\draw[gray,very thin,->] (-4,0)--(4,0);
\draw[gray,very thin,->] (0,-0.1)--(0,1.5);
\draw[dashed,thick] (-3.5,1)--(-2,1) .. controls (-1,1) and (-2,0) .. (-1,0) -- (3.5,0);
\draw[dash dot,thick] (3.5,1)--(2,1) .. controls (1,1) and (2,0) .. (1,0) -- (-3.5,0);
\draw (-2,-0.1)--(-2,0.1);
\draw (-1,-0.1)--(-1,0.1);
\draw (2,-0.1)--(2,0.1);
\draw (1,-0.1)--(1,0.1);
\node at (-2,-0.4){\small $-2\ell$};
\node at (-1,-0.4){\small $-\ell$};
\node at (2,-0.4){\small $2\ell$};
\node at (1,-0.4){\small $\ell$};
\node at (-2.5,1.2){\small $\chi_l$};
\node at (2.5,1.2){\small $\chi_r$};
\end{tikzpicture}
\caption{Graphs of the cut-off functions $\chi_l$, $\chi_r$.\label{CutOffFunctions}} 
\end{figure}

\noindent Let $\chi_l\in\mathscr{C}^{\infty}(\R^2)$ (resp. $\chi_r\in\mathscr{C}^{\infty}(\R^2)$) be a cut-off function equal to one for $x\le -2\ell$ (resp. $x\ge2\ell$) and to zero for $x\ge -\ell$ (resp. $x\le \ell$) (see Figure \ref{CutOffFunctions}). In order to describe the scattering process of the incident piston modes $w_1^{-}$ coming from $x=-\infty$ (we assume a time dependence in $e^{-i\om t}$ where $\om$ is the angular frequency) by the structure, we introduce the following solution of  Problem (\ref{PbInitial}) 
\begin{equation}\label{DefScatteringCoeff}
v= \chi_l\,(w^{-}_{1}+\rcoef\,w^{+}_{1})+\chi_r\,\tcoef\,\hat{w}^{+}_{1}+\tilde{v},
\end{equation}
where $\rcoef,\,\tcoef\in\Cplx$ and where $\tilde{v}$ decays exponentially as $O(e^{-\sqrt{\pi^2-k^2}|x|})$ for $x\to\pm\infty$. The \textit{reflection coefficient} $\rcoef$ and \textit{transmission coefficient} $\tcoef$ in (\ref{DefScatteringCoeff}) are uniquely defined. Note that the cut-off functions $\chi_l$, $\chi_r$ in (\ref{DefScatteringCoeff}) are just a convenient way to write radiation conditions at $x=\pm\infty$. Physically, the function $v$ defined in (\ref{DefScatteringCoeff}) corresponds to the so called total field associated to the incident piston wave propagating from $-\infty$ to $+\infty$. According to the energy conservation, we have 
\begin{equation}\label{ConservationOfNRJ}
|\rcoef|^2+|\tcoef|^2=1.
\end{equation}
The coefficients $\rcoef$ and $\tcoef$ depend on the features of the geometry, in particular on $L$. From time to time, we shall write $\rcoef(L)$, $\tcoef(L)$ instead of $\rcoef$, $\tcoef$. In this section, we show that, there are some $L>1$ such that $\rcoef=0$, $\tcoef=1$ (\textit{perfect invisibility}). To obtain such particular values for the scattering coefficients, we will use the fact that the geometry is symmetric with respect to the $(Oy)$ axis. 
\subsection{Decomposition in two half-waveguide problems}
Define the half-waveguide 
\begin{equation}\label{defHalfWaveguide}
\om_L:=\{(x,y)\in\Om_L\,|\,x<0\}
\end{equation}
(see Figure \ref{LimitDomain} left). Introduce the problem with Neumann boundary conditions 
\begin{equation}\label{PbChampTotalSym}
\begin{array}{|rcll}
\Delta u +k^2 u & = & 0 & \mbox{ in }\om_L\\[3pt]
 \partial_nu  & = & 0  & \mbox{ on }\partial\om_L
\end{array}
\end{equation}
as well as the problem with mixed boundary conditions 
\begin{equation}\label{PbChampTotalAntiSym}
\begin{array}{|rcll}
\Delta U + k^2 U & = & 0 & \mbox{ in }\om_L\\[3pt]
 \partial_nU  & = & 0  & \mbox{ on }\partial\om_L\cap\partial\Om_L \\[3pt]
U  & = & 0  & \mbox{ on }\Sigma_L:=\{0\}\times(0;L).
\end{array}
\end{equation}
Problems (\ref{PbChampTotalSym}) and (\ref{PbChampTotalAntiSym}) admit respectively the solutions 
\begin{eqnarray}
\label{defZetaLsym} u &=& w^+_1+\rN\,w^-_1 + \tilde{u},\qquad\hspace{2mm}\mbox{ with }\tilde{u}\in\mH^1(\om_L),\\[3pt]
\label{defZetaLanti}U &=& w^+_1+\rD\,w^-_1 + \tilde{U},\qquad\mbox{ with }\tilde{U}\in\mH^1(\om_L),
\end{eqnarray} 
where $\rN$, $\rD\in\Cplx$ are uniquely defined. Moreover, due to conservation of energy, one has
\begin{equation}\label{NRJHalfguide}
|\rN|=|\rD|=1.
\end{equation}
Direct inspection shows that if $v$ is a solution of Problem (\ref{PbInitial}) then we have $v(x,y)=(u(x,y)+U(x,y))/2$ in $\om_L$ and $v(x,y)=(u(-x,y)-U(-x,y))/2$ in $\Om_L\setminus\overline{\om_L}$ (up possibly to a term which is exponentially decaying at $\pm\infty$ if there is a trapped mode at the given wavenumber $k$). We deduce that the scattering coefficients $\rcoef$, $\tcoef$ appearing in the decomposition (\ref{DefScatteringCoeff}) of $v$ are such that
\begin{equation}\label{Formulas}
\rcoef=\frac{\rN+\rD}{2}\qquad\mbox{ and }\qquad \tcoef=\frac{\rN-\rD}{2}.
\end{equation}
\subsection{Consequence of the choice $\ell=\pi/k$}
In the particular geometry considered here, one observes that $u':=w^+_1+w^-_1=\sqrt{2/k}\cos(kx)$ is a solution to (\ref{PbChampTotalSym}) for all $L>1$. Note that this property is based on the fact that $\ell=\pi/k$, the width of the vertical branch of $\om_L$, coincides with the half wavelength of the waves $w^{\pm}_1$. By uniqueness of the definition of the coefficient $\rN$ in (\ref{defZetaLsym}), we deduce that 
\begin{equation}\label{PartRelation}
\rN=\rN(L)=1\qquad\mbox{ for all $L>1$}. 
\end{equation}
From this important remark, we see that to get $\tcoef=1$, it remains to find $L$ such that $R=R(L)=-1$. This is the goal of the next section.

\subsection{Asymptotic behaviour of the reflection coefficient for the problem with mixed boundary conditions}\label{paragraphAsymptoAnalysis}
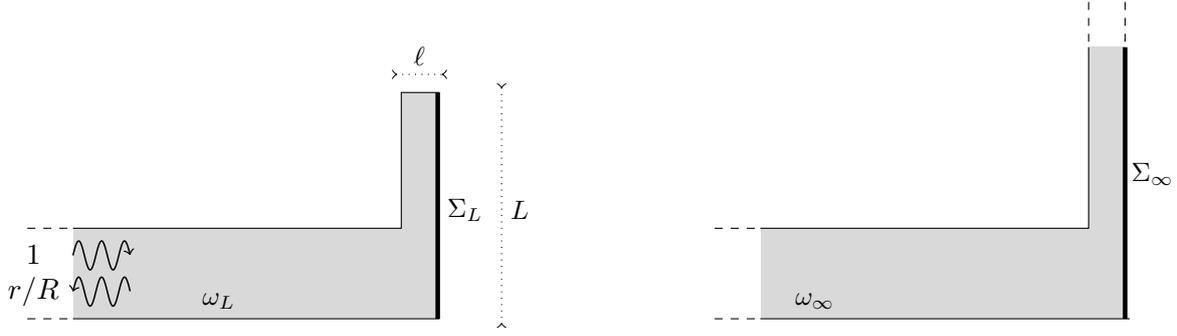
\begin{figure}[!ht]
\centering
\begin{tikzpicture}[scale=1.2]
\draw[fill=gray!30,draw=none](-4,0) rectangle (0,1);
\draw[fill=gray!30,draw=none](-0.4,0) rectangle (0,2.5);
\draw (-4,1)--(-0.4,1)--(-0.4,2.5)--(0.02,2.5);
\draw (0.02,0)--(-4,0);
\draw[line width=0.6mm] (0,0)--(0,2.5);
\draw[dashed] (-4.5,1)--(-4,1); 
\draw[dashed] (-4.5,0)--(-4,0); 
\node at (0.3,1.2){\small $\Sigma_{L}$};
\node at (-2.4,0.2){\small $\om_{L}$};
\draw[dotted,>-<] (-0.5,2.7)--(0.1,2.7);
\draw[dotted,>-<] (0.7,-0.1)--(0.7,2.6);
\node at (0.9,1.2){\small $L$};
\node at (-0.2,2.9){\small $\ell$};
\phantom{\draw[dashed] (0,3.5)--(0,3); }
\begin{scope}[xshift=-4cm,yshift=0.7cm,scale=0.8]
\draw[line width=0.2mm,->] plot[domain=0:pi/4,samples=100] (\x,{0.2*sin(20*\x r)}) node[anchor=west] {\hspace{-2.6cm}$1$};
\end{scope}
\begin{scope}[xshift=-4cm,yshift=0.3cm,scale=0.8]
\draw[line width=0.2mm,<-] plot[domain=0:pi/4,samples=100] (\x,{0.2*sin(20*\x r)}) node[anchor=west] {$\hspace{-2.6cm}r/R$};
\end{scope}
\end{tikzpicture}\qquad\qquad\qquad\begin{tikzpicture}[scale=1.2]
\draw[fill=gray!30,draw=none](-4,0) rectangle (0,1);
\draw[fill=gray!30,draw=none](-0.4,0) rectangle (0,3);
\draw (-4,0)--(0.05,0); 
\draw[line width=0.6mm] (0,0)--(0,3);
\draw (-4,1)--(-0.4,1)--(-0.4,3);
\draw[dashed] (-4.5,1)--(-4,1); 
\draw[dashed] (-4.5,0)--(-4,0); 
\draw[dashed] (0,3.5)--(0,3); 
\draw[dashed] (-0.4,3.5)--(-0.4,3); 
\node at (0.3,1.6){\small $\Sigma_{\infty}$};
\node at (-3.4,0.2){\small $\om_{\infty}$};
\phantom{\draw[dotted,>-<] (0.3,-0.1)--(0.3,2.6);}
\end{tikzpicture}
\caption{Domains $\om_{L}$ (left) and $\om_{\infty}$ (right).\label{LimitDomain}} 
\end{figure}
\noindent In this paragraph, we study the behaviour of the reflection coefficient $\rD=\rD(L)$ of the half-waveguide problem with mixed boundary conditions (\ref{PbChampTotalAntiSym})  as $L\to\infty$. We follow the approach proposed in \cite{ChNPSu}. In the analysis, the properties of Problem (\ref{PbChampTotalAntiSym}) set in the limit geometry $\om_{\infty}$ (see Figure \ref{LimitDomain} right) obtained formally taking $L\to+\infty$, play a key role. Denote
\begin{equation}\label{DefModeVert}
w^{\pm}_{2}(x,y)=\cfrac{1}{\sqrt{\gamma\ell}}\,e^{\pm i \gamma y}\sin( \frac{\pi x}{2\ell}),\qquad \mbox{with }\gamma:=\sqrt{k^2-(\pi/(2\ell))^2}=k\sqrt{3}/2,
\end{equation}
the modes propagating in the vertical branch of $\om_{\infty}$. In $\om_{\infty}$, there are the solutions 
\begin{equation}\label{defMatrixScaLim}
\begin{array}{lcl}
U_{1}^{\infty}&=& \chi_l(w^-_{1}+S^{\infty}_{11}\,w^+_{1})+\chi_t\,S^{\infty}_{12}\,w^+_{2}+\tilde{U}_{1}^{\infty}\\[4pt]
U_{2}^{\infty}&=&\chi_l\,S^{\infty}_{21}\,w^+_{1}+\chi_t\,(w^-_{2}+S^{\infty}_{22}\,w^+_{2})+\tilde{U}_{2}^{\infty},
\end{array}
\end{equation}
where $\tilde{U}_{1}^{\infty}$, $\tilde{U}_{2}^{\infty}$ decay exponentially at infinity. Here $\chi_t\in\mathscr{C}^{\infty}(\R^2)$ is a cut-off function equal to one for $y\ge 1+2\tau$ and to zero for $y\le 1+\tau$, where $\tau>0$ is a given constant. The scattering matrix
\begin{equation}\label{UnboundedScatteringMatrix}
\mathcal{S}^{\infty}:=\left(\begin{array}{cc}
S^{\infty}_{11} & S^{\infty}_{12} \\
S^{\infty}_{21} & S^{\infty}_{22} \\
\end{array}\right)\in\mathbb{C}_{2\times 2}
\end{equation}
is uniquely defined. Moreover, as shown in Proposition \ref{PropositionUnitary} in Annex (this is a classical result), $\mathcal{S}^{\infty}$ is unitary ($\mathcal{S}^{\infty}\overline{\mathcal{S}^{\infty}}^{\top}=\mrm{Id}_{2\times2}$) and symmetric. For the function 
$U$ defined in (\ref{defZetaLanti}), following for example \cite[Chap. 5, \S5.6]{MaNP00}, we make the ansatz
\begin{equation}\label{DefAnsatzs}
U = U_{1}^{\infty}+A(L)\,U_{2}^{\infty}+\dots 
\end{equation}
where $A(L)$ is a gauge function to determine and where the dots correspond to a small remainder. On $(-\ell;0)\times\{L\}$, the condition $\partial_n U=0$ leads to choose $A(L)$ such that 
\[
S^{\infty}_{12}\,e^{i\gamma L}+A(L)\,(-e^{-i\gamma L}+S^{\infty}_{22}\,e^{i\gamma L})=0\qquad\Leftrightarrow\qquad A(L)=\dsp\cfrac{S^{\infty}_{12}}{e^{-2i\gamma L}-S^{\infty}_{22}}\ . 
\]
We shall consider ansatz (\ref{DefAnsatzs}) when $|S^{\infty}_{22}|\ne1$. Since $\mathcal{S}^{\infty}$ is unitary and symmetric, this is equivalent to assume that $S^{\infty}_{12}=S^{\infty}_{21}\ne 0$. This assumption is needed so that a coupling exists between the two channels of $\om_{\infty}$. In this case, the gauge function $A(L)$ is well-defined for all $L>1$. If $|S^{\infty}_{22}|=1 \Leftrightarrow S^{\infty}_{12}=0$, we can show that as $L\to+\infty$, the reflection transmission $\rD(L)$ defined in (\ref{defZetaLanti}) tends to $S^{\infty}_{11}$. This exceptional case is not interesting for our analysis and therefore, we discard it assuming that $k\in(0;\pi)$ is such that $|S^{\infty}_{22}|\ne1 \Leftrightarrow S^{\infty}_{12}\ne0$ (we shall see in \S\ref{paragraphPerfectRef} below an example of situation where numerically $S_{12}^{\infty}\ne0$). Then we can prove that $\rD(L)=\rD^{\mrm{asy}}(L)+\dots$, with
\begin{equation}\label{defCoeffLim}
\rD^{\mrm{asy}}(L)=S^{\infty}_{11}+A(L)\,S^{\infty}_{21}=S^{\infty}_{11}+\dsp\cfrac{S^{\infty}_{12}\,S^{\infty}_{21}}{e^{-2i\gamma L}-S^{\infty}_{22}}\ .
\end{equation}
Here the dots stand for exponentially small terms. More precisely, we can establish (work as in \cite[Chap. 5, \S5.6]{MaNP00}) an error estimate of the form $|\rD(L)-\rD^{\mrm{asy}}(L)| \le C\,e^{-\eta L}$ where $C$ is a constant independent of $L>1$ and $\eta:=\sqrt{9\pi^2/(4\ell^2)-k^2}/2$. Observe that $L\mapsto \rD^{\mrm{asy}}(L)$ is a periodic function of period $\pi/\gamma=2\pi/(k\sqrt{3})=2\ell/\sqrt{3}$. As a consequence, $L\mapsto \rD(L)$ is ``almost periodic'' of period $2\pi/(k\sqrt{3})$. Denote $\mathscr{C}:=\{z\in\Cplx\,|\,|z|=1\}$ the unit circle. As $L$ tends to $+\infty$, the coefficient $\rD^{\mrm{asy}}(L)$ runs on the set
\begin{equation}\label{setSPlusMoins}
\{S^{\infty}_{11}+\dsp\frac{S^{\infty}_{12}\,S^{\infty}_{21}}{z-S^{\infty}_{22}} \,|\,z\in\mathscr{C}\}.
\end{equation}
Using classical results concerning the M\"{o}bius transform (see e.g.  \cite[Chap. 5]{Henr74}), one finds that this set is the circle centered at
\begin{equation}\label{eqnCenters}
\alpha_{\rD}:=S^{\infty}_{11}+\dsp\frac{S^{\infty}_{12}\,\overline{S^{\infty}_{22}}\,S^{\infty}_{21}}{1-|S^{\infty}_{22}|^{2}}\quad\mbox{ of radius }\quad \rho_{\rD}:=\dsp\frac{|S^{\infty}_{12}\,S^{\infty}_{21}|}{1-|S^{\infty}_{22}|^{2}}.
\end{equation}
\begin{proposition}\label{PropositionGoesThroughZero}
Assume that $S_{12}^{\infty}\ne0$. Then the set (\ref{setSPlusMoins}) is nothing else but the unit circle $\mathscr{C}$. 
\end{proposition}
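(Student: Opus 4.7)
The set in question has already been identified in \eqref{eqnCenters} as the circle of center $\alpha_{\rD}$ and radius $\rho_{\rD}$, so the plan is simply to compute these two quantities and show $\alpha_{\rD}=0$ and $\rho_{\rD}=1$. The inputs will be the unitarity and symmetry of $\mathcal{S}^{\infty}$ established just before the proposition.

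First I would read off what unitarity and symmetry say. Since $\mathcal{S}^{\infty}$ is symmetric, $S^{\infty}_{12}=S^{\infty}_{21}$. The relation $\mathcal{S}^{\infty}\overline{\mathcal{S}^{\infty}}^{\top}=\mrm{Id}$ yields, taking the diagonal entries,
\[
|S^{\infty}_{11}|^2+|S^{\infty}_{12}|^2=1,\qquad |S^{\infty}_{21}|^2+|S^{\infty}_{22}|^2=1,
\]
and hence $|S^{\infty}_{12}|^2=|S^{\infty}_{21}|^2=1-|S^{\infty}_{22}|^{2}$. Plugging this into $\rho_{\rD}$ immediately gives $\rho_{\rD}=1$.

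Next I would compute $\alpha_{\rD}$. The off-diagonal unitarity relation reads
\[
S^{\infty}_{11}\,\overline{S^{\infty}_{21}}+S^{\infty}_{12}\,\overline{S^{\infty}_{22}}=0,
\]
which, using $S^{\infty}_{12}=S^{\infty}_{21}\neq 0$, gives $S^{\infty}_{11}=-S^{\infty}_{12}\,\overline{S^{\infty}_{22}}/\overline{S^{\infty}_{12}}$. On the other hand, using $|S^{\infty}_{12}|^2=1-|S^{\infty}_{22}|^2$, the second term in $\alpha_{\rD}$ simplifies as
\[
\frac{S^{\infty}_{12}\,\overline{S^{\infty}_{22}}\,S^{\infty}_{21}}{1-|S^{\infty}_{22}|^2}=\frac{(S^{\infty}_{12})^2\,\overline{S^{\infty}_{22}}}{|S^{\infty}_{12}|^2}=\frac{S^{\infty}_{12}\,\overline{S^{\infty}_{22}}}{\overline{S^{\infty}_{12}}}.
\]
Adding these two contributions, the expression for $\alpha_{\rD}$ collapses to zero. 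Thus the set in \eqref{setSPlusMoins} is the circle of center $0$ and radius $1$, namely $\mathscr{C}$.

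There is no real obstacle here: the result is essentially a computation using two algebraic consequences of unitarity plus symmetry. The only slightly delicate point is ensuring that dividing by $\overline{S^{\infty}_{12}}$ and by $1-|S^{\infty}_{22}|^2$ is legitimate, but both are guaranteed by the standing assumption $S^{\infty}_{12}\neq 0$ combined with the diagonal unitarity identity $|S^{\infty}_{12}|^2=1-|S^{\infty}_{22}|^2$.
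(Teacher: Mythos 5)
Your proof is correct and follows essentially the same route as the paper: both deduce $\rho_{\rD}=1$ from the diagonal unitarity identity $|S^{\infty}_{12}|^2+|S^{\infty}_{22}|^2=1$ and $\alpha_{\rD}=0$ from the off-diagonal identity $S^{\infty}_{11}\overline{S^{\infty}_{12}}+S^{\infty}_{12}\overline{S^{\infty}_{22}}=0$ combined with symmetry. You merely spell out the algebra that the paper leaves as ``we deduce,'' and your attention to the legitimacy of dividing by $\overline{S^{\infty}_{12}}$ is a welcome precision.
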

\begin{proof}
Since $\mathcal{S}^{\infty}$ is unitary, we have $|S^{\infty}_{12}|^2+|S^{\infty}_{22}|^2=1$ and $S^{\infty}_{11}\overline{S^{\infty}_{12}}+S^{\infty}_{12}\overline{S^{\infty}_{22}}=0$. We deduce that $\alpha_{\rD}=0$ and $\rho_{\rD}=1$. 
\end{proof}
\noindent Proposition \ref{PropositionGoesThroughZero} together with the error estimate $|\rD(L)-\rD^{\mrm{asy}}(L)| \le C\,e^{-\eta L}$ and the energy conservation relation $|\rD(L)|=1$ show that $L\mapsto\rD(L)$ runs on the unit circle as $L\to+\infty$. As a consequence, the reflection coefficient for the problem with mixed boundary conditions $L\mapsto\rD(L)$ passes (exactly) through the point of affix $-1+0i$ an infinite number of times.

\subsection{Existence of perfectly invisible modes}\label{paragraphConcluPart1}

In Formula (\ref{Formulas}), we found that the transmission coefficient $\tcoef$ in the full waveguide $\Om_L$ satisfies $\tcoef=(\rN-\rD)/2$. Since $\rN=1$ (Formula (\ref{PartRelation})), from the analysis of the previous paragraph for the map $L\mapsto R(L)$, we deduce that $L\mapsto \tcoef(L)$ runs continuously and almost periodically on the circle of radius $1/2$ centered at $1/2+0i$ in the complex plane. In particular, $L\mapsto \tcoef(L)$ passes through the point of affix $1+0i$ almost periodically. The period is $2\pi/(k\sqrt{3})$. We summarize this result in the following theorem, the main result of the section. 
\begin{theorem}\label{MainThmPart1}
Assume that the coefficient $S_{12}^{\infty}$ in (\ref{defMatrixScaLim}) satisfies $S_{12}^{\infty}\ne0$. Then the complex curve $L\mapsto \tcoef(L)$ for the transmission coefficient passes through the point of affix $1+0i$ an infinite number of times as $L\to+\infty$.
\end{theorem}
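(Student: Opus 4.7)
The plan is to reduce the theorem to a statement about the coefficient $\rD(L)$. By formula~(\ref{Formulas}) combined with the identity $\rN(L)=1$ established in~(\ref{PartRelation}), one has $\tcoef(L)=(1-\rD(L))/2$, so $\tcoef(L)=1$ if and only if $\rD(L)=-1$. Thus the theorem amounts to proving that $L\mapsto \rD(L)$ hits the point $-1$ infinitely often as $L\to+\infty$, and the asymptotic machinery from \S\ref{paragraphAsymptoAnalysis} is precisely tailored to handle this.

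Next, I would unpack what happens for the asymptotic $\rD^{\mrm{asy}}(L)$. As $L$ increases, $z(L):=e^{-2i\gamma L}$ runs around the unit circle $\mathscr{C}$ once per period $T=\pi/\gamma$. The M\"obius transformation $z\mapsto S^{\infty}_{11}+S^{\infty}_{12}S^{\infty}_{21}/(z-S^{\infty}_{22})$ is a homeomorphism of the Riemann sphere onto itself, and by Proposition~\ref{PropositionGoesThroughZero} it sends $\mathscr{C}$ onto $\mathscr{C}$; hence its restriction is a homeomorphism of $\mathscr{C}$ onto itself. Consequently $L\mapsto \rD^{\mrm{asy}}(L)$ makes exactly one loop around $\mathscr{C}$ per period, and in particular $\rD^{\mrm{asy}}(L_n^{\mrm{asy}})=-1$ for a sequence $L_n^{\mrm{asy}}\to+\infty$.

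It remains to upgrade this to an exact hit for $\rD(L)$ itself. I would combine three ingredients: continuity of $L\mapsto \rD(L)$ (which follows from standard perturbation theory for the scattering problem), the energy identity $|\rD(L)|=1$ from~(\ref{NRJHalfguide}), and the exponential error $|\rD(L)-\rD^{\mrm{asy}}(L)|\le Ce^{-\gamma L}$ quoted just after~(\ref{defCoeffLim}). Since both functions lie on $\mathscr{C}$, lift them to continuous arguments $\theta(L)$ and $\theta^{\mrm{asy}}(L)$. The closeness on $\mathscr{C}$ forces $|\theta(L)-\theta^{\mrm{asy}}(L)|\bmod 2\pi\le C'e^{-\gamma L}$ for $L$ large; since $\theta^{\mrm{asy}}$ increases by $\pm 2\pi$ over each interval $[L_0,L_0+T]$, so does $\theta$ once $L_0$ is large enough. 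The intermediate value theorem then forces $\theta(L)\equiv \pi\pmod{2\pi}$, i.e.\ $\rD(L)=-1$, at least once per period for all sufficiently large $L$, yielding infinitely many zeros of $\rD(L)+1$ and hence infinitely many $L$ with $\tcoef(L)=1$.

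The main obstacle is exactly this last step: an exponentially small error could in principle let $\rD$ bypass $-1$, but confinement to $\mathscr{C}$ combined with the nontrivial winding of the asymptotic curve rules this out via a purely topological argument. The rest of the work is bookkeeping, resting on results already gathered in the section (the M\"obius identification of the image, the unitarity of $\mathcal{S}^{\infty}$, energy conservation, and the exponential remainder estimate justified as in~\cite[Chap.~5, \S5.6]{MaNP00}).
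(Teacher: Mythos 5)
Your proposal is correct and follows essentially the same route as the paper: reduce to $\rD(L)=-1$ via $\tcoef=(1-\rD)/2$, identify the image of $\rD^{\mrm{asy}}$ as the unit circle through the M\"obius transform and the unitarity of $\mathcal{S}^{\infty}$ (Proposition \ref{PropositionGoesThroughZero}), and combine the exponential error estimate with $|\rD(L)|=1$ and continuity to get exact hits. Your final lifting/winding argument merely spells out the topological step that the paper states without detail.
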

\begin{remark}
Again, we mention that in \S\ref{paragraphPerfectRef} below, we provide an example of situation where numerically $S_{12}^{\infty}\ne0$. 
\end{remark}
\begin{remark}\label{RmkMirorEffect}
The analysis above also shows that there are some $L>1$ such that $\tcoef(L)=0$ (\textit{perfect reflection}) and $\rcoef(L)=1$ (see an illustration with Figure \ref{MirrorEffect} below). In that case, all the energy sent in the waveguide is backscattered at $x=-\infty$. But this is not surprising because it has been proved in \cite{ChPaSu} that this mirror effect appears naturally, even in waveguides which are not symmetric with respect to the $(Oy)$ axis and for almost all widths of the vertical branch of $\Om_L$. 
\end{remark}
\section{Trapped modes}\label{SectionExistenceOfTrappedModes}

In this section, we prove for that certain values of $L>1$, trapped modes exist for the half-waveguide problem with Neumann boundary conditions defined in (\ref{PbChampTotalSym}). We remind the reader that we say that $u$ is a trapped mode for Problem (\ref{PbChampTotalSym}) if $u$ belongs to the Sobolev space $\mH^1(\om_L)$ and satisfies (\ref{PbChampTotalSym}). Using a symmetry argument with respect to the line $\{x=0\}$, this will prove the existence of trapped modes for the Neumann problem set in the original domain $\Om_L$ defined in (\ref{defOriginalDomain}).

\subsection{Setting }

We shall use the same notation as in the previous section. Additionally, set $\beta=\sqrt{\pi^2-k^2}$ and define 
\[
W^{\pm}_{2}(x,y)=\cfrac{1}{\sqrt{2\beta}}\,(e^{-\beta x}\mp i e^{\beta x })\cos(\pi y).
\]
Note the particular definition of the functions $W^{\pm}_{2}$ which are ``wave packets'', combinations of exponentially decaying and growing modes as $x\to-\infty$. The normalisation coefficient for $W^{\pm}_{2}$ is chosen so that the matrix defined in (\ref{AugmentedScatteringDef}) is unitary. In \cite{NaPl94bis,KaNa02,Naza06,Naza11}, it is proved that in the half-waveguide $\om_L$ (unbounded in the left direction), there are the solutions
\begin{equation}\label{defu1u2}
\begin{array}{lcl}
u_{1}&=&w^-_{1}+s_{11}\,w^+_{1}+s_{12}\,W^+_{2}+\tilde{u}_{1}\\[4pt]
u_{2}&=&W^-_{2}+s_{21}\,w^+_{1}+s_{22}\,W^+_{2}+\tilde{u}_{2}
\end{array}
\end{equation}
where $\tilde{u}_{1}$, $\tilde{u}_{2}$ decay as $O(e^{\sqrt{4\pi^2-k^2}x})$ when $x\to-\infty$. The complex constants $s_{ij}$, $i,j\in\{1,2\}$ in (\ref{defu1u2}) are uniquely defined. They allow us to define the augmented scattering matrix introduced in \cite{NaPl94bis,KaNa02,Naza06,Naza11}
\begin{equation}\label{AugmentedScatteringDef}
\mathbb{S}:=\left(\begin{array}{cc}
s_{11} & s_{12}\\
s_{21} & s_{22}
\end{array}\right)\in\mathbb{C}_{2\times 2}.
\end{equation}
Working exactly as in the proof of Proposition \ref{PropositionUnitary} in Annex, one shows that the matrix $\mathbb{S}$ is unitary ($\mathbb{S}\,\overline{\mathbb{S}}^{\top}=\mrm{Id}_{2\times2}$) and symmetric ($s_{21}=s_{12}$). This augmented scattering matrix turns out be a very efficient tool to detect the presence of trapped modes. Indeed, we have the following algebraic criterion (see e.g. \cite[Thm. 2]{Naza11}).
\begin{lemma}\label{LemmaExistenceTrappedMode}
If $s_{22}=-1$, then $u_{2}$ is a trapped mode for Problem (\ref{PbChampTotalSym}) set in $\om_L$. 
\end{lemma}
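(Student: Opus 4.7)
The plan is to show that, under the hypothesis $s_{22}=-1$, the asymptotic expansion of $u_2$ in (\ref{defu1u2}) loses every term that could prevent it from lying in $\mH^1(\om_L)$, leaving a nontrivial solution of (\ref{PbChampTotalSym}). First I would exploit the unitarity of $\mathbb{S}$: the row identity $|s_{21}|^2+|s_{22}|^2=1$ combined with $|s_{22}|=1$ forces $s_{21}=0$, and the symmetry of $\mathbb{S}$ then yields $s_{12}=0$ as well. This algebraic step is essential, since it removes the propagating contribution $s_{21}\,w^+_1$ from the expansion of $u_2$ and rules out any energy leaking to $x=-\infty$ through the piston channel.

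Next I would examine the evanescent wave-packet part. Substituting $s_{22}=-1$ together with the explicit definition of $W^{\pm}_2$ gives
\[
W^-_2+s_{22}\,W^+_2 \;=\; W^-_2-W^+_2 \;=\; \frac{2i}{\sqrt{2\beta}}\,e^{\beta x}\cos(\pi y),
\]
so the growing exponentials $e^{-\beta x}$ cancel and only the decaying profile $e^{\beta x}\cos(\pi y)$ survives. Since $\om_L$ is unbounded only as $x\to -\infty$ and $\beta=\sqrt{\pi^2-k^2}>0$, this remaining piece lies in $\mH^1$, and the remainder $\tilde{u}_2$ decays even faster by construction. Hence $u_2\in \mH^1(\om_L)$. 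The Helmholtz equation and the mixed boundary conditions of (\ref{PbChampTotalSym}) are satisfied by definition of $u_2$, and non-triviality is automatic: the coefficient of the incoming packet $W^-_2$ equals $1$, so by uniqueness of the asymptotic decomposition in (\ref{defu1u2}) the possibility $u_2\equiv 0$ is excluded.

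I do not expect serious difficulty here. The only subtle point is recognising the right mechanism: unitarity of the augmented scattering matrix must \emph{simultaneously} kill the propagating channel and cancel the growing evanescent mode, and this is precisely what the single condition $s_{22}=-1$ achieves thanks to $|s_{22}|=1$. Everything else amounts to unpacking the definitions in (\ref{defu1u2}) and (\ref{AugmentedScatteringDef}).
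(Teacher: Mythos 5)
Your argument is correct and follows the same route as the paper: unitarity of $\mathbb{S}$ together with $s_{22}=-1$ forces $s_{21}=0$, and the explicit cancellation $W^-_2-W^+_2=i\sqrt{2/\beta}\,e^{\beta x}\cos(\pi y)$ leaves only decaying terms as $x\to-\infty$, so $u_2$ is a nonzero element of $\mH^1(\om_L)$ solving (\ref{PbChampTotalSym}). The extra observations (symmetry giving $s_{12}=0$, non-triviality via the unit coefficient of $W^-_2$) are consistent with, and slightly more detailed than, the paper's proof.
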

\begin{remark}
Note that $s_{22}=-1$ is only a sufficient criterion of existence of trapped modes. Indeed the geometry $\om_L$ can support trapped modes for Problem (\ref{PbChampTotalSym}) with $s_{22}\ne-1$. In this case, this trapped mode must decay as $O(e^{\sqrt{4\pi^2-k^2}x})$ when $x\to-\infty$.
\end{remark}
\begin{proof}
If $s_{22}=-1$, since $\mathbb{S}$ is unitary, then $s_{21}=0$. In such a situation, according to (\ref{defu1u2}), we have $u_2=-i\sqrt{2/\beta}\,e^{\beta x}\cos(\pi y)+O(e^{\sqrt{4\pi^2-k^2}x})$ as $x\to-\infty$. This shows that $u_2\not\equiv0$ belongs to $\mH^1(\om_L)$. In other words $u_2$ is a trapped mode. 
\end{proof}

\subsection{Asymptotic behaviour of the augmented scattering matrix and existence of trapped modes}

In this paragraph, we study the asymptotic behaviour of $\mathbb{S}=\mathbb{S}(L)$ as $L\to+\infty$.\\
\newline
$\star$ As in the previous section, we first observe that $u'=w^+_1+w^-_1=\sqrt{2/k}\cos(kx)$ is a solution to (\ref{PbChampTotalSym}) for all $L>1$. From the uniqueness of the definition of the coefficients $s_{ij}$ in (\ref{defu1u2}), we deduce that for all $L>1$, we have $s_{11}(L)=1$ and $s_{12}(L)=s_{21}(L)=0$. Since $\mathbb{S}$ is unitary, we infer that $|s_{22}(L)|=1$ for all $L>1$.\\
\newline 
$\star$ It remains to investigate the behaviour of $s_{22}=s_{22}(L)$ as $L\to+\infty$. We adapt a bit what has been done in \S\ref{paragraphAsymptoAnalysis}. In the vertical branch of $\om_{\infty}$, the limit geometry of $\om_L$ as $L\to+\infty$, the following functions 
\[
w^{\pm}_{3}(x,y)=\cfrac{1}{\sqrt{2k\ell}}\,e^{\pm i k y},\qquad w^{\pm}_{4}(x,y)=\cfrac{1}{\sqrt{\ell}}\,(y\mp i )\cos(\pi \frac{x}{\ell})
\]
are propagating modes for Problem (\ref{PbChampTotalSym}). Note that the width $\ell>0$ has been chosen so that the frequency $k=\pi/\ell$ is a threshold frequency for Problem (\ref{PbChampTotalSym}) in the vertical branch of $\om_{\infty}$. This explains the special form of the modes $w_4^{\pm}$. The normalisation coefficients for $w^{\pm}_{3}$, $w^{\pm}_{4}$ are chosen so that the matrix $\mathbb{S}^{\infty}$ defined in (\ref{DefAugScaMat}) is unitary. In $\om_{\infty}$, there are the solutions 
\[
\begin{array}{lcl}
u_{1}^{\infty}&=& \chi_l(w^-_{1}+s^{\infty}_{11}\,w^+_{1}+s^{\infty}_{12}\,W^+_{2})+\chi_t\,(s^{\infty}_{13}\,w^+_{3}+s^{\infty}_{14}\,w^+_{4})+\tilde{u}_{1}^{\infty}\\[4pt]
u_{2}^{\infty}&=& \chi_l(W^-_{2}+s^{\infty}_{21}\,w^+_{1}+s^{\infty}_{22}\,W^+_{2})+\chi_t\,(s^{\infty}_{23}\,w^+_{3}+s^{\infty}_{24}\,w^+_{4})+\tilde{u}_{2}^{\infty}\\[4pt]
u_{3}^{\infty}&=& \chi_l(s^{\infty}_{31}\,w^+_{1}+s^{\infty}_{32}\,W^+_{2})+\chi_t\,(w^-_{3}+s^{\infty}_{33}\,w^+_{3}+s^{\infty}_{34}\,w^+_{4})+\tilde{u}_{3}^{\infty}\\[4pt]
u_{4}^{\infty}&=& \chi_l(s^{\infty}_{41}\,w^+_{1}+s^{\infty}_{42}\,W^+_{2})+\chi_t\,(w^-_{4}+s^{\infty}_{43}\,w^+_{3}+s^{\infty}_{44}\,w^+_{4})+\tilde{u}_{4}^{\infty},
\end{array}
\]
where $\tilde{u}_{1}^{\infty}$, $\tilde{u}_{2}^{\infty}$, $\tilde{u}_{3}^{\infty}$, $\tilde{u}_{4}^{\infty}$ decay as $O(e^{\sqrt{4\pi^2-k^2}x})$ for $x\to-\infty$ and as $O(e^{-\sqrt{4\pi^2/\ell^2-k^2}y})$ for $y\to+\infty$. The augmented scattering matrix
\begin{equation}\label{DefAugScaMat}
\mathbb{S}^{\infty}:=\left(\begin{array}{cccc}
s^{\infty}_{11} & s^{\infty}_{12} & s^{\infty}_{13}& s^{\infty}_{14}\\
s^{\infty}_{21} & s^{\infty}_{22} & s^{\infty}_{23}& s^{\infty}_{24} \\
s^{\infty}_{31} & s^{\infty}_{32} & s^{\infty}_{33}& s^{\infty}_{34}\\
s^{\infty}_{41} & s^{\infty}_{42} & s^{\infty}_{43}& s^{\infty}_{44}
\end{array}\right)\in\mathbb{C}_{4\times 4}
\end{equation}
is uniquely defined, unitary ($\mathbb{S}^{\infty}\overline{\mathbb{S}^{\infty}}^{\top}=\mrm{Id}_{4\times4}$) and symmetric (again, work as in the proof of Proposition \ref{PropositionUnitary} in Annex to prove the two latter properties). For $u_{2}$, we make the ansatz (see \cite[Chap. 5, \S5.6]{MaNP00})
\begin{equation}\label{ansatzTrapped}
\begin{array}{lcl}
u_{2} &=& u_{2}^{\infty}+a(L)\,u_{3}^{\infty}+b(L)\,u_{4}^{\infty}+\dots \  ,
\end{array}
\end{equation}
where $a(L)$, $b(L)$ are gauge functions to determine and where the dots stand for small remainders. On $(-\ell;0)\times\{L\}$, the condition $\partial_nu_{2}=0$ leads to choose $a(L)$, $b(L)$ such that 
\begin{equation}\label{system}
\begin{array}{c}
s^{\infty}_{23}\,e^{ikL}+a(L)\,(-e^{-ikL}+s^{\infty}_{33}\,e^{ikL})+b(L)\,s^{\infty}_{43}\,e^{ikL}=0\\[4pt]
s^{\infty}_{24}+a(L)\,s^{\infty}_{34}+b(L)(1+s^{\infty}_{44})=0.
\end{array}
\end{equation}
This yields 
\[
a(L)=\cfrac{s^{\infty}_{24}s^{\infty}_{43}-s^{\infty}_{23}(1+s^{\infty}_{44})}{(1+s^{\infty}_{44})(-e^{-2ikL}+s^{\infty}_{33})-s^{\infty}_{34}s^{\infty}_{43}}\quad\mbox{ and }\quad b(L)=\cfrac{s^{\infty}_{24}(e^{-2ikL}-s^{\infty}_{33})+s^{\infty}_{23}s^{\infty}_{34}}{(1+s^{\infty}_{44})(-e^{-2ikL}+s^{\infty}_{33})-s^{\infty}_{34}s^{\infty}_{43}}.
\]
We shall consider ansatz (\ref{ansatzTrapped}) for $u_2$ when $s^{\infty}_{44}\ne-1$. If $s^{\infty}_{44}=-1$, according to relations (\ref{relPart}) below, then $s^{\infty}_{11}=1$. Since $\mathbb{S}^{\infty}$ is unitary and symmetric, we deduce that $s^{\infty}_{12}=s^{\infty}_{13}=s^{\infty}_{14}=s^{\infty}_{24}=s^{\infty}_{34}=s^{\infty}_{21}=s^{\infty}_{31}=s^{\infty}_{41}=s^{\infty}_{42}=s^{\infty}_{43}=0$. In such a situation, an analysis similar to what has been done in \S\ref{paragraphAsymptoAnalysis} can be developed when $|s^{\infty}_{33}|\ne 1$ allowing us to show the existence of trapped modes for certain $L>1$. We will not consider this rather exceptional case in the following. Instead, we will assume that $k\in(0;\pi)$ is such that
\begin{equation}\label{AssumptionsCoef}
s^{\infty}_{14}\ne0\qquad\mbox{ and }\qquad |s^{\infty}_{33}-\cfrac{s^{\infty}_{13}s^{\infty}_{34}}{s^{\infty}_{14}}|\ne1.
\end{equation}
The first assumption of (\ref{AssumptionsCoef}) implies that $s^{\infty}_{44}\ne-1$. The second one is needed so that we can solve system (\ref{system}) with respect to $a(L)$ and $b(L)$ (again we use relations (\ref{relPart}) below to get this statement). The authors do not know how to proceed without the latter assumption.\\
\newline 
When (\ref{AssumptionsCoef}) is true, for all $L>1$ the denominators appearing in the definition of $a(L)$, $b(L)$ are not null. Then replacing in (\ref{ansatzTrapped}) $a(L)$, $b(L)$ by its expression derived above, we obtain $s_{22}(L)=s^{\mrm{asy}}_{22}(L)+\dots$ with 
\begin{equation}\label{DefSetComplex}
s^{\mrm{asy}}_{22}(L) = s^{\infty}_{22}+\cfrac{s^{\infty}_{24}s^{\infty}_{43}s^{\infty}_{32}-s^{\infty}_{23}(1+s^{\infty}_{44})s^{\infty}_{32}}{(1+s^{\infty}_{44})(-e^{-2ikL}+s^{\infty}_{33})-s^{\infty}_{34}s^{\infty}_{43}}+\cfrac{s^{\infty}_{24}(e^{-2ikL}-s^{\infty}_{33})s^{\infty}_{42}+s^{\infty}_{23}s^{\infty}_{34}s^{\infty}_{42}}{(1+s^{\infty}_{44})(-e^{-2ikL}+s^{\infty}_{33})-s^{\infty}_{34}s^{\infty}_{43}}.
\end{equation}
Below we prove the following result.
\begin{lemma}\label{LemmaCaracCircle}
Assume that the coefficients of the matrix $\mathbb{S}^{\infty}\in\Cplx_{4\times4}$ satisfy Assumptions (\ref{AssumptionsCoef}). Then $\{s^{\mrm{asy}}_{22}(L)\,|\,L\in(1;+\infty)\}$ is  the unit circle $\mathscr{C}:=\{z\in\Cplx\,|\,|z|=1\}$.
\end{lemma}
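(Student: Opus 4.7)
The plan is to reduce the statement to a standard fact about M\"obius transformations, mirroring the strategy used for Proposition \ref{PropositionGoesThroughZero} but with one extra twist, since here we do not have an explicit unitarity constraint yielding $|\alpha|=|\rho|$ immediately.

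First, I would set $z:=e^{-2ikL}$ and rewrite (\ref{DefSetComplex}) by putting everything over the common denominator
\[
D(z):=(1+s^{\infty}_{44})(-z+s^{\infty}_{33})-s^{\infty}_{34}s^{\infty}_{43}.
\]
A direct inspection shows that both the numerator and the denominator of $s^{\mrm{asy}}_{22}(L)-s^{\infty}_{22}$ are affine in $z$, so $s^{\mrm{asy}}_{22}$ is the value at $z=e^{-2ikL}$ of a M\"obius transform $F(z)=(Az+B)/(Cz+D)$. As $L$ runs through $(1,+\infty)$, the argument $z=e^{-2ikL}$ sweeps the unit circle $\mathscr{C}$ periodically (with period $\pi/k$) and covers it entirely. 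Since M\"obius transformations send circles to circles (or lines), the image $\{s^{\mrm{asy}}_{22}(L)\,|\,L>1\}=F(\mathscr{C})$ is at worst a circle, a line, or a single point.

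Next I would rule out the degenerate case. The first hypothesis $s^{\infty}_{14}\ne0$ combined with the relations deduced from unitarity and symmetry of $\mathbb{S}^{\infty}$ (those collected in \eqref{relPart}, which in particular force $s^{\infty}_{44}\ne-1$) ensures $C=-(1+s^{\infty}_{44})\ne0$, so $F$ does not degenerate to a constant from a vanishing denominator. The second hypothesis $|s^{\infty}_{33}-s^{\infty}_{13}s^{\infty}_{34}/s^{\infty}_{14}|\ne1$ is precisely what prevents the numerator and denominator from being proportional; equivalently, it guarantees $AD-BC\ne0$, so $F$ is a genuine M\"obius map and $F(\mathscr{C})$ is a true circle, call it $\mathscr{C}'$.

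The final step closes the argument by matching moduli. From Section~3.2, $|s_{22}(L)|=1$ for all $L>1$, and the error estimate analogous to the one in \S\ref{paragraphAsymptoAnalysis} (obtained by the same construction as in \cite[Chap.~5, \S5.6]{MaNP00}) yields
\[
|s_{22}(L)-s^{\mrm{asy}}_{22}(L)|\le C e^{-\alpha L}\qquad \text{for some }\alpha>0.
\]
Pick any $w\in\mathscr{C}'$: by periodicity of $L\mapsto e^{-2ikL}$ there exists a sequence $L_n\to+\infty$ with $s^{\mrm{asy}}_{22}(L_n)=w$, and then $s_{22}(L_n)\to w$, which forces $|w|=1$. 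Hence $\mathscr{C}'\subset\mathscr{C}$; but $\mathscr{C}'$ is a full circle and $\mathscr{C}$ is a full circle, so $\mathscr{C}'=\mathscr{C}$.

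The only real obstacle I anticipate is the bookkeeping in the degeneracy check, i.e.\ verifying $AD-BC\ne0$ under \eqref{AssumptionsCoef}. One must use the unitarity and symmetry of $\mathbb{S}^{\infty}$ (together with $s^{\infty}_{11}=1$ being equivalent to $s^{\infty}_{44}=-1$, via \eqref{relPart}) to simplify the explicit expression of $AD-BC$ and recognise it as a nonzero multiple of $s^{\infty}_{14}\bigl[s^{\infty}_{33}-s^{\infty}_{13}s^{\infty}_{34}/s^{\infty}_{14}\bigr]$ in modulus. Everything else is standard M\"obius geometry plus the uniform bound $|s_{22}(L)|=1$.
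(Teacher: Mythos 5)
Your high-level strategy matches the paper's (reduce to a M\"obius transform of $z=e^{-2ikL}$ and use that such maps send circles to circles), but your final step is genuinely different: instead of computing the centre of the image circle explicitly, you pin it down by combining the a priori relation $|s_{22}(L)|=1$, the exponential error estimate, and the periodicity of $L\mapsto e^{-2ikL}$, so that every point of the image circle is a limit of unimodular numbers and the image circle, being a full circle contained in $\mathscr{C}$, must equal $\mathscr{C}$. This is a legitimate and rather elegant substitute for the paper's computation ``$\alpha=0$'' (which, as the paper itself remarks, uses only the unitarity of $\mathbb{S}^{\infty}$ once $\rho=1$ is known).

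There is, however, a genuine gap in the non-degeneracy step, and it is exactly the half of the paper's proof that cannot be bypassed. Writing $s^{\mrm{asy}}_{22}=c-d^2/(-z+a)$ with $a=s^{\infty}_{33}-s^{\infty}_{13}s^{\infty}_{34}/s^{\infty}_{14}$ and $d=s^{\infty}_{23}-s^{\infty}_{24}s^{\infty}_{13}/s^{\infty}_{14}$, the M\"obius determinant $AD-BC$ equals $-d^2$ up to a nonzero normalisation factor; it is \emph{not} a multiple of $s^{\infty}_{14}\bigl(s^{\infty}_{33}-s^{\infty}_{13}s^{\infty}_{34}/s^{\infty}_{14}\bigr)$. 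You are conflating two different quantities: $s^{\infty}_{14}(-z+a)$ is the determinant of the linear system (\ref{system}) (its non-vanishing is what $s^{\infty}_{14}\ne0$ and $|a|\ne1$ buy you, namely that the pole of the M\"obius map is off the unit circle and the gauge functions are well defined), whereas degeneracy of the map to a constant is governed by $d$. Nothing in Assumptions (\ref{AssumptionsCoef}) says directly that $d\ne0$, and if $d=0$ the image is the single point $\{c\}$, so your limiting argument only yields $|c|=1$, not the full circle. The only available route from $|a|\ne1$ to $d\ne0$ is the identity $|a|^2+|d|^2=1$, which is precisely the ``$\rho=1$'' computation of the paper and the one place where the special relations (\ref{relPart}), coming from the explicit solution $u'=\sqrt{2/k}\cos(kx)$, are needed. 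Once you import that computation, your argument closes and shortens the remainder of the proof; without it, the proof is incomplete.
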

\noindent From the error estimate $|s_{22}(L)-s^{\mrm{asy}}_{22}(L)| \le C\,e^{-\sqrt{4\pi^2/\ell^2-k^2} L/2}$, the relation $|s_{22}(L)|=1$ for all $L>1$, and the fact that $L\mapsto s_{22}(L)$ is continuous, we deduce that we have 
$\{s_{22}(L)\,|\,L\in(1;+\infty)\}=\mathscr{C}$. And in particular, we infer that $L \mapsto s_{22}(L)$ passes through the point of affix $-1+i0$ almost periodically (the period is equal to $\pi/k$). From Lemma \ref{LemmaExistenceTrappedMode}, we deduce the following theorem, the main result of the section.
\begin{theorem}\label{MainThmPart2}
Assume that the coefficients of the matrix $\mathbb{S}^{\infty}\in\Cplx_{4\times4}$ satisfy Assumptions (\ref{AssumptionsCoef}). Then there is an infinite number of $L>1$ such that there are trapped modes for the Neumann Problem (\ref{PbChampTotalSym}) in $\om_L$. 
\end{theorem}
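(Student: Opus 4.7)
The conclusion follows once we produce an infinite sequence $L_n \to +\infty$ with $s_{22}(L_n) = -1$: Lemma \ref{LemmaExistenceTrappedMode} then provides a trapped mode at each $L_n$. To construct such a sequence, I would combine Lemma \ref{LemmaCaracCircle}, the identity $|s_{22}(L)| = 1$ established at the start of the paragraph, and the exponential error bound $|s_{22}(L) - s^{\mrm{asy}}_{22}(L)| \le C e^{-\beta_\ell L}$. Inspection of (\ref{DefSetComplex}) shows that $s^{\mrm{asy}}_{22}$ depends on $L$ only through $e^{-2ikL}$, hence is exactly $\pi/k$-periodic; combined with continuity and surjectivity onto $\mathscr{C}$ (Lemma \ref{LemmaCaracCircle}), this forces any continuous determination $\theta : (1;+\infty) \to \R$ of $\arg s^{\mrm{asy}}_{22}$ to satisfy $\theta(L + \pi/k) - \theta(L) = 2\pi N$ for some \emph{nonzero} integer $N$, since otherwise $\theta$ would remain in a compact interval, contradicting surjectivity onto $\mathscr{C}$. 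Consequently $|\theta(L)| \to +\infty$ linearly as $L \to +\infty$.

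The next step would be to transfer this winding to $s_{22}$ itself. For $L$ large enough that $Ce^{-\beta_\ell L} < 1/2$, the quotient $q(L) := s_{22}(L)/s^{\mrm{asy}}_{22}(L)$ lies on $\mathscr{C}$ (both numerator and denominator do) and satisfies $|q(L) - 1| \le 2Ce^{-\beta_\ell L}$, so it stays in a small arc around $1$ on which the principal branch of $\arg$ is well defined and continuous. Calling $\delta(L)$ this principal argument, one has $|\delta(L)| \le C' e^{-\beta_\ell L}$, and $\Theta(L) := \theta(L) + \delta(L)$ is a continuous determination of $\arg s_{22}(L)$. Since $\theta(L) \to \pm\infty$ while $\delta(L) \to 0$, the intermediate value theorem supplies infinitely many $L_n \to +\infty$ with $\Theta(L_n) \in \pi + 2\pi\Z$, i.e.\ $s_{22}(L_n) = -1$, and Lemma \ref{LemmaExistenceTrappedMode} concludes.

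The delicate point will be precisely this lifting step: the error estimate controls only the Euclidean distance on $\mathscr{C}$ and does not \emph{a priori} rule out $2\pi$ jumps in the real-valued argument of $s_{22}$ relative to that of $s^{\mrm{asy}}_{22}$. What rescues the argument is that both functions stay \emph{exactly} on $\mathscr{C}$, so one works with the \emph{ratio} $q(L)$ rather than the difference: being pinned near $1$ by the exponential estimate, $q(L)$ has an unambiguous small continuous argument, and the nonzero winding number of $s^{\mrm{asy}}_{22}$ is transported rigorously to $s_{22}$. Once this continuity/lifting point is secured, everything else is routine.
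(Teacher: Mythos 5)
Your argument is essentially the paper's own: the paper combines Lemma \ref{LemmaCaracCircle}, the identity $|s_{22}(L)|=1$, the error estimate $|s_{22}(L)-s^{\mrm{asy}}_{22}(L)|\le C e^{-\beta_\ell L}$ and the continuity of $L\mapsto s_{22}(L)$ to conclude that $s_{22}$ sweeps the whole unit circle and hence passes through $-1$ infinitely often, after which Lemma \ref{LemmaExistenceTrappedMode} applies; you have simply made explicit the lifting step that the paper states without detail, and your device of working with the ratio $q(L)=s_{22}(L)/s^{\mrm{asy}}_{22}(L)$, pinned near $1$ on $\mathscr{C}$, is a correct way to transport the winding. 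One small slip: your justification that $N\ne 0$ ``since otherwise $\theta$ would remain in a compact interval, contradicting surjectivity onto $\mathscr{C}$'' does not hold as stated, because a bounded $\pi/k$-periodic continuous lift whose oscillation is at least $2\pi$ still exponentiates onto all of $\mathscr{C}$. The correct (and immediate) justification is structural: $s^{\mrm{asy}}_{22}(L)=c-d^2/(-e^{-2ikL}+a)$ is the composition of $L\mapsto -e^{-2ikL}$, which traverses $\mathscr{C}$ exactly once per period $\pi/k$, with a M\"obius transformation that is nonconstant (Lemma \ref{LemmaCaracCircle} gives $\rho=1$, so $d\ne0$) and whose pole $z=-a$ avoids $\mathscr{C}$ by the second assumption in (\ref{AssumptionsCoef}); it therefore maps $\mathscr{C}$ homeomorphically onto $\mathscr{C}$, so the winding number per period is $\pm1$. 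With that one-line repair your proof is complete and in fact more careful than the paper's deduction.
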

\begin{remark}
Note that the period of the function $L\mapsto s^{\mrm{asy}}_{22}(L)$ is equal to $\pi/k$ while the period of $L\mapsto\rD^{\mrm{asy}}(L)$ (see (\ref{defCoeffLim})) is equal to $2\pi/(k\sqrt{3})$. Thus in this geometry, trapped modes occur more frequently (with respect to $L\to+\infty$) than perfectly invisible modes. 
\end{remark}
\begin{remark}
If $u$ is a trapped mode for the Neumann Problem (\ref{PbChampTotalSym}) in $\om_L$, then the function $v$ such that $v=u$ in $\om_L$ and $v(x,y)=u(-x,y)$ in $\Om_L\setminus\overline{\om_L}$ is a trapped mode for the Neumann Problem (\ref{PbInitial}) in the unfold geometry $\Om_L$. 
\end{remark}

\subsection{Proof of Lemma \ref{LemmaCaracCircle}}
In this paragraph, we prove Lemma \ref{LemmaCaracCircle} which ensures that the set $\{s^{\mrm{asy}}_{22}(L)\,|\,L\in(1;+\infty)\}$ defined in (\ref{DefSetComplex}) coincides with the unit circle. \\
\newline
The already met function $u'=w^-_{1}+w^+_{1}=\sqrt{2/k}\cos(kx)$ is a solution of Problem (\ref{PbChampTotalSym}) set in the limit geometry $\om_{\infty}$ admitting the decomposition 
\[
u' = \chi_l\,(w^-_{1}+w^+_{1})+\cfrac{\sqrt{\ell}}{i\sqrt{2k}}\,\chi_t\,(w^-_{4}-w^+_{4})+\tilde{u}'.
\]
where $\tilde{u}'$ decays as $O(e^{\sqrt{4\pi^2-k^2}x})$ for $x\to-\infty$ and as $O(e^{-\sqrt{4\pi^2/\ell^2-k^2}y})$ for $y\to+\infty$. Set $\lambda=\sqrt{\ell}/(i\sqrt{2k})$. Observing that $u'-u^{\infty}_1+\lambda\,u^{\infty}_4$ has the same decay as $\tilde{u}'$, we deduce 
\begin{equation}\label{relPart}
s^{\infty}_{11}+\lambda\,s^{\infty}_{41}=1,\qquad s^{\infty}_{12}+\lambda\, s^{\infty}_{42}=0,
\qquad s^{\infty}_{13}+\lambda\,s^{\infty}_{43}=0,\qquad s^{\infty}_{14}+\lambda\,s^{\infty}_{44}=-\lambda.
\end{equation}
This allows one to write 
\[
\begin{array}{lcl}
s^{\mrm{asy}}_{22} &=& s^{\infty}_{22}+\cfrac{s^{\infty}_{24}s^{\infty}_{43}s^{\infty}_{32}-s^{\infty}_{23}(1+s^{\infty}_{44})s^{\infty}_{32}}{(1+s^{\infty}_{44})(-e^{-2ikL}+s^{\infty}_{33})-s^{\infty}_{34}s^{\infty}_{43}}+\cfrac{s^{\infty}_{24}(e^{-2ikL}-s^{\infty}_{33})s^{\infty}_{42}+s^{\infty}_{23}s^{\infty}_{34}s^{\infty}_{42}}{(1+s^{\infty}_{44})(-e^{-2ikL}+s^{\infty}_{33})-s^{\infty}_{34}s^{\infty}_{43}}\\[14pt]
 &= & s^{\infty}_{22}+\cfrac{2s^{\infty}_{24}s^{\infty}_{13}s^{\infty}_{32}-s^{\infty}_{23}s^{\infty}_{14}s^{\infty}_{32}-s^{\infty}_{12}(-e^{-2ikL}+s^{\infty}_{33})s^{\infty}_{42}}{s^{\infty}_{14}(-e^{-2ikL}+s^{\infty}_{33})-s^{\infty}_{13}s^{\infty}_{34}}\\[14pt]
        &= & s^{\infty}_{22}-\cfrac{s^{\infty}_{12}s^{\infty}_{42}}{s^{\infty}_{14}}+\cfrac{1}{s^{\infty}_{14}}\cfrac{2s^{\infty}_{24}s^{\infty}_{13}s^{\infty}_{32}-s^{\infty}_{23}s^{\infty}_{14}s^{\infty}_{32}-\cfrac{s^{\infty}_{12}s^{\infty}_{42}s^{\infty}_{13}s^{\infty}_{34}}{s^{\infty}_{14}}}{-e^{-2ikL}+s^{\infty}_{33}-\cfrac{s^{\infty}_{13}s^{\infty}_{34}}{s^{\infty}_{14}}}\ .
\end{array}
\]
Set 
\[
a=s^{\infty}_{33}-\cfrac{s^{\infty}_{13}s^{\infty}_{34}}{s^{\infty}_{14}},\quad b=\cfrac{1}{s^{\infty}_{14}}\,(\,2s^{\infty}_{24}s^{\infty}_{13}s^{\infty}_{32}-s^{\infty}_{23}s^{\infty}_{14}s^{\infty}_{32}-\cfrac{s^{\infty}_{12}s^{\infty}_{42}s^{\infty}_{13}s^{\infty}_{34}}{s^{\infty}_{14}}\,)\quad\mbox{ and }\quad c=s^{\infty}_{22}-\cfrac{s^{\infty}_{12}s^{\infty}_{42}}{s^{\infty}_{14}}.
\]
One can check that $b=-d^2$ with 
\[
d=s^{\infty}_{23}-\cfrac{s^{\infty}_{24}s^{\infty}_{13}}{s^{\infty}_{14}}.
\]
With this notation, we have $s^{\mrm{asy}}_{22}=c-d^2/(-e^{-2ikL}+a)$. Thus, as $L\to+\infty$, the coefficient $s^{\mrm{asy}}_{22}(L)$ runs on the set $\{c-d^2/(z+a)\,|\,z\in\mathscr{C}\}$. Working with the M\"{o}bius transform (same result as in the previous section), we deduce that $\{c-d^2/(z+a)\,|\,z\in\mathscr{C}\}$ coincides with the circle centered at 
\begin{equation}\label{caracCircle}
\alpha:=c+\cfrac{d^2\,\overline{a}}{1-|a|^2}\qquad\mbox{ of radius }\qquad\rho:=\cfrac{|d|^2}{1-|a|^2}.
\end{equation}
Therefore, to complete the proof of Lemma \ref{LemmaCaracCircle}, it remains to show that $\alpha=0$ and $\rho=1$.\\
\newline
$\star$ First we establish that $\rho=1$. This is equivalent to show that $|a|^2+|d|^2=1 \Leftrightarrow I=|s^{\infty}_{14}|^2$ with 
\begin{equation}\label{quantiteI}
I:=(|s^{\infty}_{33}|^2+|s^{\infty}_{23}|^2)|s^{\infty}_{14}|^2
+(|s^{\infty}_{24}|^2+|s^{\infty}_{34}|^2)|s^{\infty}_{13}|^2-2\,\Re e\,(s^{\infty}_{14}\overline{s^{\infty}_{13}}\,(s^{\infty}_{23}\overline{s^{\infty}_{24}}+s^{\infty}_{33}\overline{s^{\infty}_{34}})).
\end{equation}
Since $\mathbb{S}^{\infty}$ is unitary and symmetric, we have the identity
\begin{equation}\label{rel6}
s^{\infty}_{13}\overline{s^{\infty}_{14}}+s^{\infty}_{23}\overline{s^{\infty}_{24}}+s^{\infty}_{33}\overline{s^{\infty}_{34}}+s^{\infty}_{34}\overline{s^{\infty}_{44}}=0.
\end{equation}
Using (\ref{rel6}) in (\ref{quantiteI}), we get
\begin{equation}\label{quantiteIbis}
\begin{array}{lcl}
I&=&(|s^{\infty}_{23}|^2+|s^{\infty}_{33}|^2)|s^{\infty}_{14}|^2
+(|s^{\infty}_{24}|^2+|s^{\infty}_{34}|^2)|s^{\infty}_{13}|^2+2\,\Re e\,(s^{\infty}_{14}\overline{s^{\infty}_{13}}\,(s^{\infty}_{13}\overline{s^{\infty}_{14}}+s^{\infty}_{34}\overline{s^{\infty}_{44}}))\\[4pt]
&=&(|s^{\infty}_{13}|^2+|s^{\infty}_{23}|^2+|s^{\infty}_{33}|^2)|s^{\infty}_{14}|^2
+(|s^{\infty}_{14}|^2+|s^{\infty}_{24}|^2+|s^{\infty}_{34}|^2)|s^{\infty}_{13}|^2+2\,\Re e\,(s^{\infty}_{14}\overline{s^{\infty}_{13}} s^{\infty}_{34}\overline{s^{\infty}_{44}}).
\end{array}
\end{equation}
Using (\ref{relPart}), we can write
\begin{equation}\label{quantiteInterBis}
\begin{array}{lcl}
2\,\Re e\,(s^{\infty}_{14}\overline{s^{\infty}_{13}}s^{\infty}_{34}\overline{s^{\infty}_{44}})=-2\,|s^{\infty}_{34}|^2|\Re e\,(s^{\infty}_{14}\,\overline{\lambda}\,\overline{s^{\infty}_{44}}) &=&-|s^{\infty}_{34}|^2(|\lambda|^2-|s^{\infty}_{34}|^2-|\lambda|^2|s^{\infty}_{44}|^2)\\[5pt]
&=&-|s^{\infty}_{13}|^2+|s^{\infty}_{34}|^2|s^{\infty}_{14}|^2+|s^{\infty}_{13}|^2|s^{\infty}_{44}|^2.
\end{array}
\end{equation}
Plugging (\ref{quantiteInterBis}) in (\ref{quantiteIbis}), we obtain
\begin{equation}\label{conclusionRho}
I=(|s^{\infty}_{13}|^2+|s^{\infty}_{23}|^2+|s^{\infty}_{33}|^2+|s^{\infty}_{34}|^2)|s^{\infty}_{14}|^2
+(|s^{\infty}_{14}|^2+|s^{\infty}_{24}|^2+|s^{\infty}_{34}|^2+|s^{\infty}_{44}|^2-1)|s^{\infty}_{13}|^2=|s^{\infty}_{14}|^2.
\end{equation}
To derive the second equality in (\ref{conclusionRho}), we used again the fact $\mathbb{S}^{\infty}$ is unitary. Identity (\ref{conclusionRho}) ensures that $\rho=1$.\\
\newline
$\star$ Now, we prove that $\alpha=c+d^2\overline{a}/(1-|a|^2)=0$. Since $\rho=|a|^2+|d|^2=1$, this is equivalent to show that $c\overline{d}+d\overline{a}=0$. We have 
\begin{equation}\label{TermToAssess}
\begin{array}{lcl}
|s^{\infty}_{14}|^2(c\overline{d}+d\overline{a}) &=&\phantom{-}|s^{\infty}_{14}|^2(s^{\infty}_{22}\overline{s^{\infty}_{23}}+s^{\infty}_{23}\overline{s^{\infty}_{33}})+|s^{\infty}_{24}|^2s^{\infty}_{12}\overline{s^{\infty}_{13}}+|s^{\infty}_{13}|^2s^{\infty}_{24}\overline{s^{\infty}_{34}}\\[4pt]
 & & -s^{\infty}_{14}\overline{s^{\infty}_{13}}(s^{\infty}_{23}\overline{s^{\infty}_{34}}+s^{\infty}_{22}\overline{s^{\infty}_{24}})
 -s^{\infty}_{24}\overline{s^{\infty}_{14}}(s^{\infty}_{12}\overline{s^{\infty}_{23}}+s^{\infty}_{13}\overline{s^{\infty}_{33}}).
\end{array}
\end{equation}
Since $\mathbb{S}^{\infty}$ is unitary and symmetric, we have
\[
\begin{array}{lcl}
s^{\infty}_{12}\overline{s^{\infty}_{13}}+s^{\infty}_{22}\overline{s^{\infty}_{23}}+s^{\infty}_{23}\overline{s^{\infty}_{33}}+s^{\infty}_{24}\overline{s^{\infty}_{34}}&=&0\\[4pt]
s^{\infty}_{12}\overline{s^{\infty}_{14}}+s^{\infty}_{22}\overline{s^{\infty}_{24}}+s^{\infty}_{23}\overline{s^{\infty}_{34}}+s^{\infty}_{24}\overline{s^{\infty}_{44}}&=&0\\[4pt]
s^{\infty}_{11}\overline{s^{\infty}_{31}}+s^{\infty}_{12}\overline{s^{\infty}_{23}}+s^{\infty}_{13}\overline{s^{\infty}_{33}}+s^{\infty}_{14}\overline{s^{\infty}_{34}}&=&0.
\end{array}
\]
Using these three identities in (\ref{TermToAssess}), we find 
\begin{equation}\label{lastEquality}
\begin{array}{lcl}
|s^{\infty}_{14}|^2(c\overline{d}+d\overline{a}) &=&-|s^{\infty}_{14}|^2(s^{\infty}_{12}\overline{s^{\infty}_{13}}+s^{\infty}_{24}\overline{s^{\infty}_{34}})+|s^{\infty}_{24}|^2s^{\infty}_{12}\overline{s^{\infty}_{13}}+|s^{\infty}_{13}|^2s^{\infty}_{24}\overline{s^{\infty}_{34}}\\[4pt]
 & & +s^{\infty}_{24}\overline{s^{\infty}_{13}}(s^{\infty}_{11}\overline{s^{\infty}_{14}}+s^{\infty}_{14}\overline{s^{\infty}_{44}})
+|s^{\infty}_{14}|^2 (s^{\infty}_{12}\overline{s^{\infty}_{13}}+ s^{\infty}_{24}\overline{s^{\infty}_{34}})\\[8pt]
& =& |s^{\infty}_{24}|^2s^{\infty}_{12}\overline{s^{\infty}_{13}}+|s^{\infty}_{13}|^2s^{\infty}_{24}\overline{s^{\infty}_{34}}-|s^{\infty}_{24}|^2s^{\infty}_{12}\overline{s^{\infty}_{13}}+|s^{\infty}_{13}|^2s^{\infty}_{24}\overline{s^{\infty}_{34}}=0.
\end{array}
\end{equation}
To derive the last line of the above equality, we used the relation
\[
s^{\infty}_{11}\overline{s^{\infty}_{14}}+s^{\infty}_{12}\overline{s^{\infty}_{24}}+s^{\infty}_{13}\overline{s^{\infty}_{34}}+s^{\infty}_{14}\overline{s^{\infty}_{44}}=0.
\]
This gives $\alpha=0$ and allows us to conclude that the set $\{s^{\mrm{asy}}_{22}(L)\,|\,L\in(1;+\infty)\}$ is indeed the unit circle.
\begin{remark}
One can note that the special form of the matrix $\mathbb{S}^{\infty}$ (see relations (\ref{relPart})) due to the particular choice of the geometry is used only in the proof of $\rho=1$. Once this property is established, the fact that $\mathbb{S}^{\infty}$ is unitary suffices to conclude that $\alpha=0$.
\end{remark}

\section{Numerical experiments}\label{SectionNumExpe}

\subsection{Perfectly invisible modes}\label{paragraphPerfectRef}
In the first series of experiments, we exhibit some $L>1$ such that perfect invisibility holds in the geometry $\Om_L$ defined in (\ref{defOriginalDomain}). For each $L$ in a given range, we compute numerically the transmission coefficient $\tcoef$ defined in (\ref{DefScatteringCoeff}). To proceed, we use a $\mrm{P}2$ finite element method in a truncated waveguide. On the artificial boundary created by the truncation, a Dirichlet-to-Neumann operator with $\mrm{15}$ terms serves as a transparent condition. We take $k=0.8\pi$ so that the width of the vertical branch of $\Om_L$ is equal to $2\ell=2\pi/k=2.5$. In Figure \ref{figResult3} left, we display the curve $L\mapsto \tcoef(L)$ for $L\in(1.3;8)$. In accordance with the results obtained in \S\ref{paragraphConcluPart1}, we observe that when $L\to+\infty$, $L\mapsto \tcoef(L)$ runs on the circle of radius $1/2$ centered at $1/2+0i$ in the complex plane. In particular, $L\mapsto \tcoef(L)$ passes through the point of affix $1+0i$ (Theorem \ref{MainThmPart1}). In Figure \ref{figResult3} right, we display the curve $L\mapsto -\ln |\tcoef(L)-1|$ for $L\in(1.3;8)$. The picks correspond to the values of $L$ such that $\tcoef(L)=1$. According to the proof of Theorem \ref{MainThmPart1}, we expect that the picks are almost periodic with a distance between two picks tending to $2\pi/(k\sqrt{3})=2.5/\sqrt{3}\approx1.44$ as $L\to+\infty$. The numerical results we get are coherent with this value. In Figure \ref{figResultT1}, we represent the real part of the total field $v$ defined in (\ref{DefScatteringCoeff}) as well as $v-w^-_1$ for $L=2.5756$ (first pick of Figure \ref{figResult3} right). Finally in Figure \ref{figResultT1Gros}, we display $v$ and $v-w^-_1$ in another geometry where $\tcoef(L)=1$. Here the domain is 
\begin{equation}\label{defDomainBis}
\tilde{\Om}_L=\{(x,y)\in\R^2\,|\,0<y<g_L(x)\}
\end{equation}
where $g_L:\R\to\R$ is the even staircase function such that 
$g_L(x)=L$ for $ 0 \le x <\ell$, $g_L(x)=2.5$ for $\ell < x <2\ell$, $g_L(x)=2$ for $2\ell < x <3\ell$, $g_L(x)=1.5$ for $3\ell < x <4\ell$ and 
$g_L(x)=1$ for $4\ell<x$ (see Figure \ref{figResultT1Gros}). The two important points are to choose $g_L$ so that $u'=w^-_1+w^+_1=\sqrt{2/k}\cos(kx)$ satisfies the initial problem (\ref{PbInitial}) set in $\tilde{\Om}_L$ for all $L>1$ and to preserve the symmetry of the geometry with respect to the $(Oy)$ axis. Then playing with $L$ as explained in Section \ref{SectionPerfectInvisibility}, one can get $\tcoef(L)=1$ (theoretically and numerically). Of course other staircase geometries satisfy the two mentioned properties.

\begin{remark}\label{ExplanationRealPart}
In the numerical experiments leading to Figures \ref{figResultT1}, \ref{figResultT1Gros}, one finds that $\Re e\,(v-w^-_1)\equiv 0$. This can be proved. Indeed, observing that $v-w^-_1+\overline{v-w^-_1}=v+\overline{v}-(w^+_1+w^-_1)$, we deduce that $v-w^-_1+\overline{v-w^-_1}$ is a solution of Problem (\ref{PbInitial}). Since $v-w^-_1+\overline{v-w^-_1}$ is exponentially decaying as $|x|\to+\infty$ (because $\tcoef=1$), we infer that if trapped modes do not exist, there holds $v-w^-_1+\overline{v-w^-_1}\equiv0\Leftrightarrow \Re e\,(v-w^-_1)\equiv 0$. Note that in the proof, we use again the fact that in the particular geometries considered in the present work, $w^+_1+w^-_1$ is a solution of Problem (\ref{PbInitial}).
\end{remark}

\begin{figure}[!ht]
\centering
\includegraphics[width=0.48\textwidth]{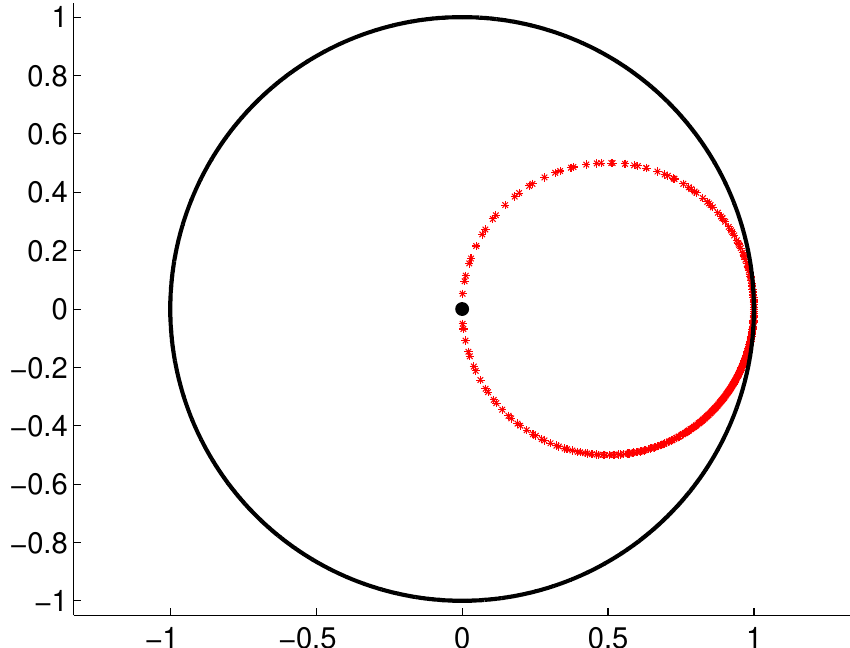}\quad\includegraphics[width=0.48\textwidth]{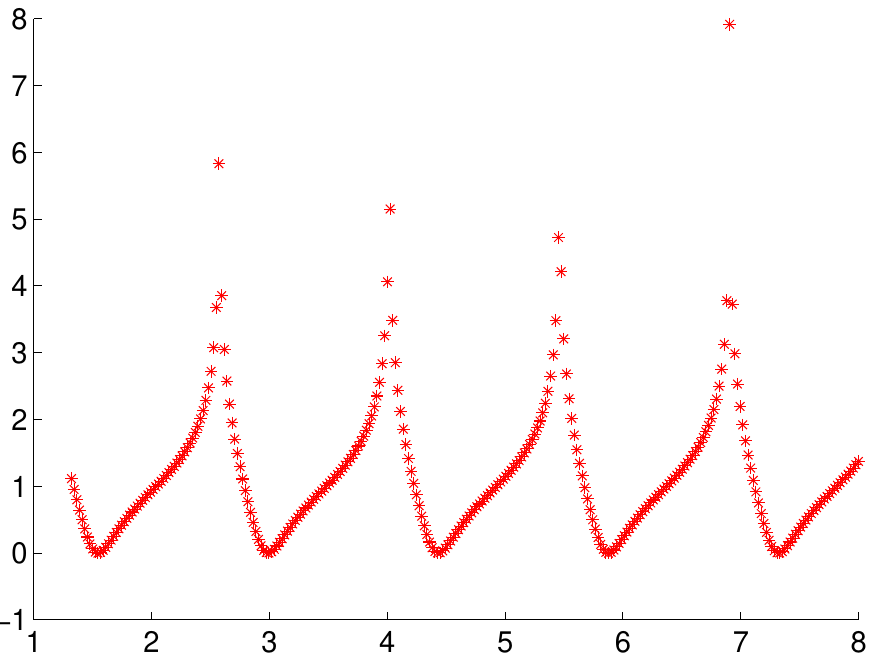}
\caption{Left: coefficient $L\mapsto \tcoef(L)$ for $L\in [1.3;8]$. According to the conservation of energy (\ref{ConservationOfNRJ}), we know that the scattering coefficient $\tcoef$ is located inside the unit disk delimited by the black bold line. Right: curve $L\mapsto -\ln|\tcoef(L)-1|$ for $L\in [1.3;8]$.\label{figResult3}} 
\end{figure}

\begin{figure}[!ht]
\centering
\includegraphics[width=0.95\textwidth]{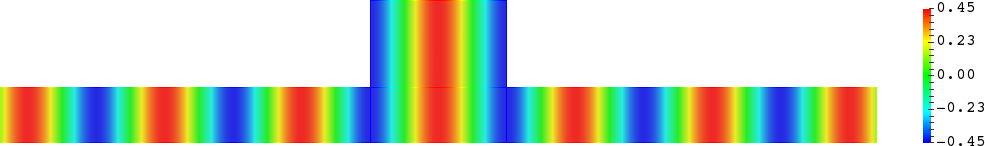}\\[10pt]
\includegraphics[width=0.95\textwidth]{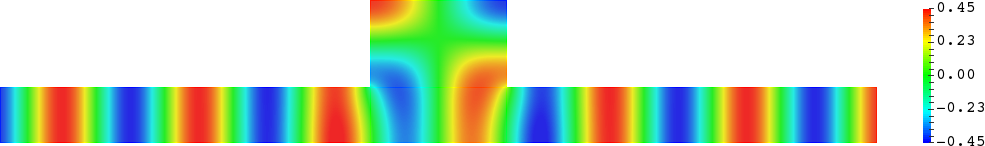}\\[10pt]
\includegraphics[width=0.95\textwidth]{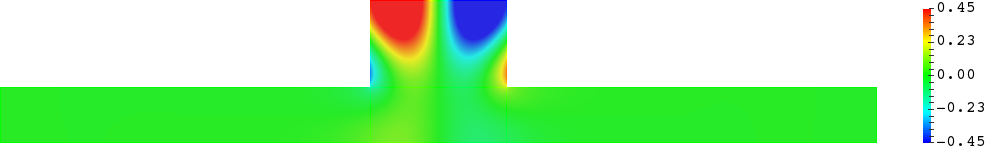}
\caption{$\Re e\,v$ (top), $\Im m\,v$ (middle) and $\Im m\,(v-w^-_1)$ (bottom) for $L=2.5756$ where $v$ is the function introduced in (\ref{DefScatteringCoeff}). One finds that $\Re e\,(v-w^-_1)\equiv 0$. The latter result is specific to the geometry considered here (see Remark \ref{ExplanationRealPart}).
\label{figResultT1}}
\end{figure}

\begin{figure}[!ht]
\centering
\includegraphics[width=\textwidth]{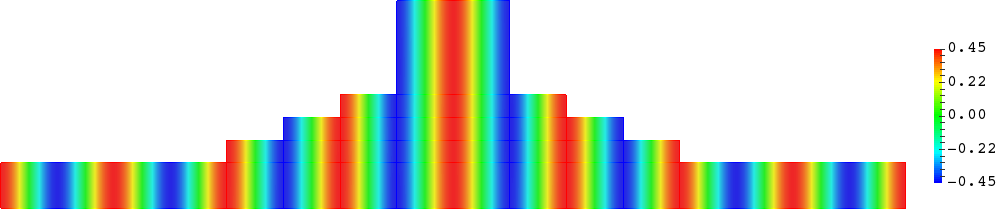}\\[15pt]
\includegraphics[width=\textwidth]{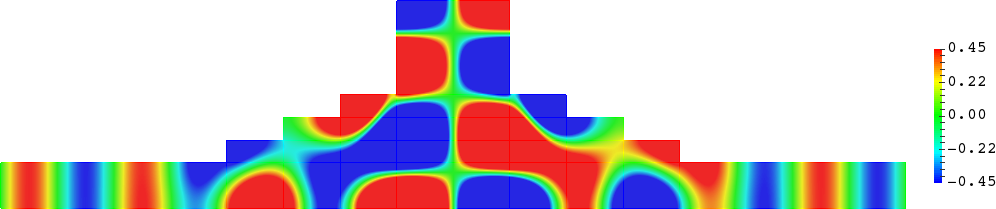}\\[15pt]
\includegraphics[width=\textwidth]{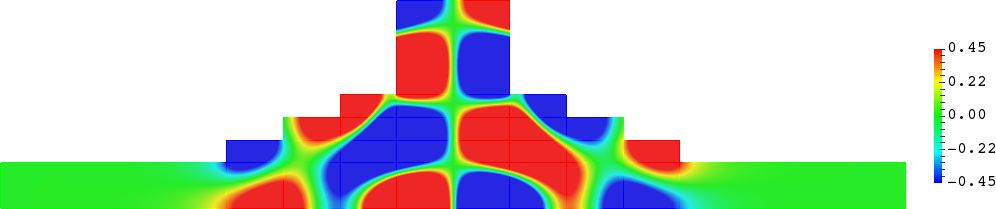}
\caption{$\Re e\,v$ (top), $\Im m\,v$ (middle) and $\Im m\,(v-w^-_1)$ (bottom) in the waveguide $\tilde{\Om}_L$ defined in (\ref{defDomainBis})  for $L=4.5808$. One finds that $\Re e\,(v-w^-_1)\equiv 0$. The latter result is specific to the geometry considered here (see Remark \ref{ExplanationRealPart}).
\label{figResultT1Gros}}
\end{figure}
\newpage

\subsection{Perfect reflection}

As indicated in Remark \ref{RmkMirorEffect}, the method proposed in Section \ref{SectionPerfectInvisibility} allows one also to exhibit situations where $\tcoef(L)=0$ (perfect reflection) and $\rcoef(L)=1$. In Figure \ref{MirrorEffect}, we display this mirror effect, with the energy completely backscattered, in the geometry $\tilde{\Om}_L$ defined in (\ref{defDomainBis}) for a well chosen $L$. More precisely, we display the real part of the total field and we observe that it is indeed exponentially decaying as $x\to+\infty$.

\begin{remark}\label{ExplanationTzero}
In Figure \ref{MirrorEffect}, we observe that $\Im m\,v\equiv 0$ when $\rcoef=1$. This is a general result that holds without assumption on the geometry of the waveguide as soon as $\rcoef=1$. Indeed, if $\tcoef=0$ then $|\rcoef|=1$ and one can check that $v-\overline{\rcoef}\,\overline{v}$ is a solution of Problem (\ref{PbInitial}) which is exponentially decaying as $|x|\to+\infty$. Therefore, if trapped modes do not exist, we deduce that $v-\overline{\rcoef}\,\overline{v}\equiv0\Leftrightarrow v=\overline{\rcoef}\,\overline{v}$. In particular, if $\rcoef=1$, we obtain $\Im m\,v\equiv 0$.
\end{remark}

\begin{figure}[!ht]
\centering
\includegraphics[width=\textwidth]{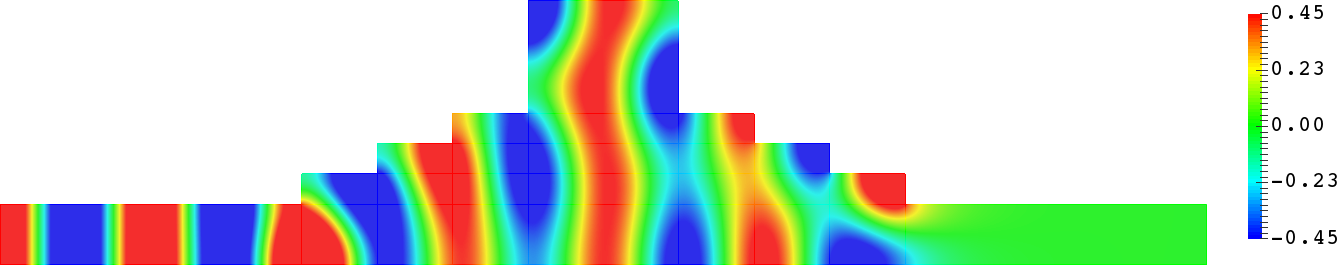}
\caption{$\Re e\,v$ in the waveguide $\tilde{\Om}_L$ defined in (\ref{defDomainBis}) for $L=4.3758$. One finds that $\Im m\,v\equiv 0$. The latter result actually holds in any waveguide where $\rcoef=1$ (see Remark \ref{ExplanationTzero}).
\label{MirrorEffect}}
\end{figure}

\subsection{Trapped modes}
In the third series of experiments, we give examples of geometries supporting trapped modes for the initial Helmholtz problem  (\ref{PbInitial}) with Neumann boundary condition. For each $L$ in a given range, this time we compute numerically the  coefficients of the augmented scattering matrix $\mathbb{S}\in\Cplx_{2\times2}$ defined in (\ref{AugmentedScatteringDef}). Again we use a $\mrm{P}2$ finite element method set in a truncated waveguide. We emphasize here that we need to work with a well-suited Dirichlet-to-Neumann map to deal with the wave packet $W_2^+$ appearing in the decompositions of $u_1$, $u_2$ in (\ref{defu1u2}). In Figure \ref{AugmentedScatteringMatrix} left, we display the coefficients  $L\mapsto s_{ij}(L)$ for $i,j\in\{1,2\}$ and $L\in(1.3;8)$. The wavenumber is set to $k=0.8\pi$. As shown by the theory, indeed we have $s_{11}(L)=1$ and $s_{12}(L)=s_{21}(L)=0$ for all $L>1$. Moreover, we observe that $L\mapsto s_{22}(L)$ runs on the unit circle. Consequently, it indeed goes through the point of affix $-1+i0$ which guarantees the existence of trapped modes for certain $L>1$ (Theorem \ref{MainThmPart2}). In Figure \ref{AugmentedScatteringMatrix} right, we display the curve $L\mapsto -\ln |s_{22}(L)+1|$ for $L\in(1.3;8)$. Indeed, it has some picks indicating values of $L$ such that $s_{22}(L)=-1$. According to the proof of Theorem \ref{MainThmPart2}, we expect a distance between two picks approximately equal to $\pi/k=1.25$. The numerical results are in good agreement with this value. In Figure \ref{TrappedMode}, we display the real part of a trapped mode in $\om_L$ for $L=2.5524$ (first pick of Figure \ref{AugmentedScatteringMatrix} right). Finally, in Figure \ref{TrappedModeFancy}, we display the real part of a trapped mode in the waveguide $\tilde{\Om}_L$ defined in (\ref{defDomainBis}) for $L=3.8273$. This trapped mode has been obtained symmetrising the trapped mode of the half-waveguide problem with Neumann boundary conditions with respect to the $(Oy)$ axis. Following the analysis of Section \ref{SectionExistenceOfTrappedModes}, we can construct trapped modes in any staircase geometry satisfying the two properties mentioned at the end of \S\ref{paragraphPerfectRef}.

\begin{figure}[!ht]
\centering
\includegraphics[width=0.48\textwidth]{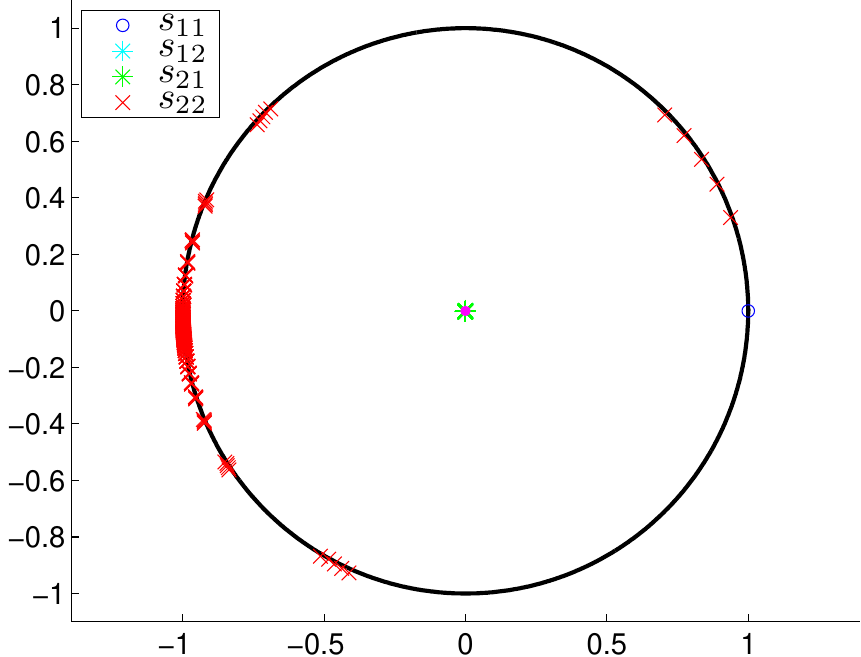}\quad\includegraphics[width=0.48\textwidth]{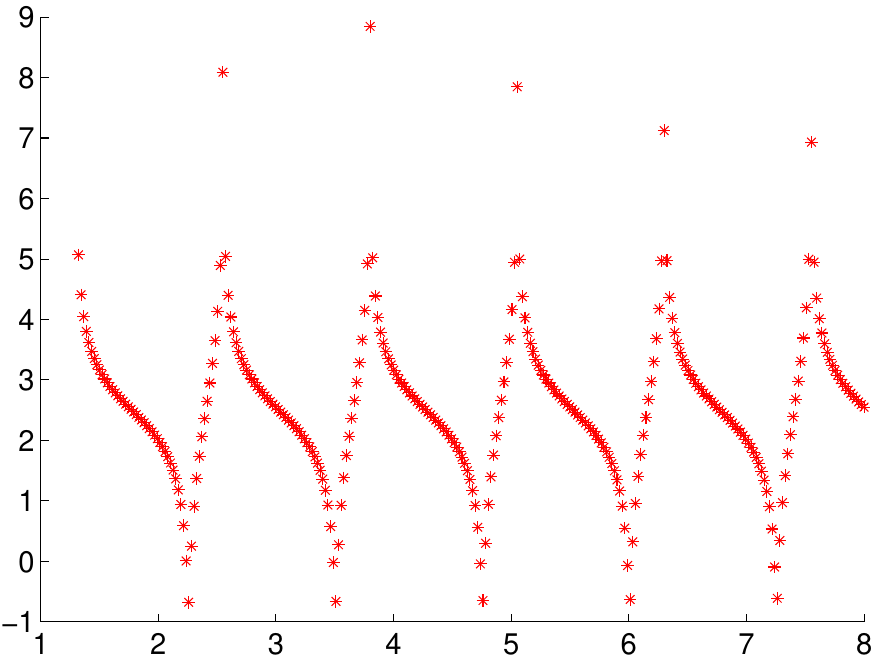}
\caption{Left: coefficients of the augmented scattering matrix defined in (\ref{AugmentedScatteringDef}) for $L\in [1.3;8]$. Right: curve $L\mapsto -\ln|s_{22}(L)+1|$ for $L\in [1.3;8]$.\label{AugmentedScatteringMatrix}} 
\end{figure}

\begin{figure}[!ht]
\centering
\includegraphics[scale=0.55]{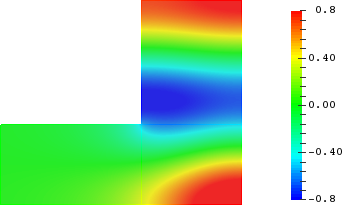}
\caption{Real part of a trapped mode in the geometry $\om_L$ defined in (\ref{defHalfWaveguide}) for $L=2.5524$. \label{TrappedMode}}
\end{figure}

\begin{figure}[!ht]
\centering
\includegraphics[scale=0.48]{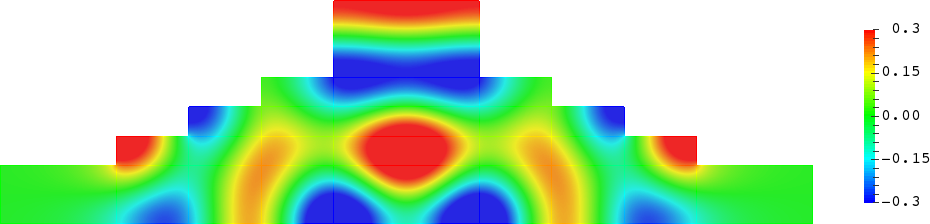}
\caption{Real part of a trapped mode in the waveguide $\tilde{\Om}_L$ defined in (\ref{defDomainBis})  for $L=3.8273$. \label{TrappedModeFancy}}
\end{figure}

\newpage

\section{Conclusion}\label{SectionConclusion}

In this article, we proved the existence of perfectly invisible and trapped modes in simple particular geometries for the Helmholtz problem with Neumann boundary conditions. Importantly, the waveguide has to be symmetric with respect to the $(Oy)$ axis and endowed with a branch of tunable height whose width coincides with the wavelength of the incident wave. The analysis has been done in the domain introduced in (\ref{defOriginalDomain}). Other examples of geometrical situations where the method can be performed have been presented in Section \ref{SectionNumExpe}. Our technique seems specific to the problem with Neumann boundary conditions and it does not look simple to modify it to consider for example the case of Dirichlet boundary conditions (appearing e.g. in the analysis of quantum waveguides). Finally, what we did in $\mrm{2D}$ can be adapted to higher dimension. 
 
\section*{Annex}
In this Annex, for the convenience of the reader, we provide the detail of the proof of a well-known proposition.
\begin{proposition}\label{PropositionUnitary}
The scattering matrix $\mathcal{S}^{\infty}$ defined in (\ref{UnboundedScatteringMatrix}) is unitary and symmetric. 
\end{proposition}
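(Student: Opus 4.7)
The plan is to prove both properties by applying Green's second identity in a suitably truncated version of $\om_{\infty}$ and then letting the truncation parameter tend to infinity. Unitarity uses the identity with complex conjugation (flux conservation), and symmetry uses it without conjugation.

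For a large parameter $R>0$, truncate by setting
\[
\om_{\infty}^{R}:=\om_{\infty}\cap\{x>-R\}\cap\{y<R\},
\]
so that the ``artificial'' part of the boundary consists of the left section $\Gamma^{R}_{l}:=\{-R\}\times(0;1)$ and the top section $\Gamma^{R}_{t}:=(-\ell;0)\times\{R\}$, both of which lie in the monomode cross-sections of the horizontal and vertical channels respectively. First I would apply Green's identity to $U_i^{\infty}$ and $\overline{U_j^{\infty}}$, $i,j\in\{1,2\}$: since both satisfy the Helmholtz equation, the volume term vanishes, leaving the boundary integral
\[
\int_{\partial\om_{\infty}^{R}}\bigl(\partial_n U_i^{\infty}\,\overline{U_j^{\infty}}-U_i^{\infty}\,\partial_n \overline{U_j^{\infty}}\bigr)\,d\sigma=0.
\]
On the physical part of $\partial\om_{\infty}^{R}$, the Neumann condition kills $\partial_nU_i^{\infty}$ while the Dirichlet condition on $\Sigma_{\infty}$ kills $U_i^{\infty}$, so only the contributions from $\Gamma^{R}_{l}$ and $\Gamma^{R}_{t}$ survive.

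Next I would insert the modal expansions (\ref{defMatrixScaLim}) on $\Gamma^{R}_{l}$ and $\Gamma^{R}_{t}$, splitting each $U_i^{\infty}$ into its propagating part (a combination of $w_1^{\pm}$ on $\Gamma^{R}_{l}$, of $w_2^{\pm}$ on $\Gamma^{R}_{t}$) and the evanescent remainder $\tilde{U}_i^{\infty}$. The key computational facts are: (i) the transverse profiles $1$ on $(0;1)$ and $\sin(\pi\cdot/(2\ell))$ on $(-\ell;0)$ are $L^2$-orthonormal up to the explicit constants built into the normalisations $1/\sqrt{2k}$ and $1/\sqrt{\gamma\ell}$, which are precisely chosen so that each propagating mode carries unit flux; (ii) the cross terms between a propagating mode and $\tilde{U}_i^{\infty}$ vanish as $R\to+\infty$ because $\tilde{U}_i^{\infty}$ is $O(e^{-\sqrt{\pi^2-k^2}\,R})$ on $\Gamma^{R}_{l}$ and $O(e^{-\sqrt{(\pi/\ell)^2-k^2}\,R})$ on $\Gamma^{R}_{t}$; (iii) the self-interaction of the evanescent remainders $\tilde U_i^{\infty}\partial_n \overline{\tilde U_j^{\infty}}-\partial_n\tilde U_i^{\infty}\overline{\tilde U_j^{\infty}}$ vanishes in the limit too since both factors decay exponentially. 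Passing to the limit $R\to+\infty$ one obtains a purely algebraic relation on the propagating mode coefficients which, after the direct calculation using $w_1^{\pm}$, $w_2^{\pm}$ at $x=-R$ and $y=R$, reads
\[
\delta_{ij}-(\mathcal{S}^{\infty}\,\overline{\mathcal{S}^{\infty}}^{\top})_{ij}=0,
\]
i.e.\ $\mathcal{S}^{\infty}$ is unitary.

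For symmetry, I would repeat exactly the same scheme, but with the bilinear (rather than sesquilinear) Green identity applied to $U_i^{\infty}$ and $U_j^{\infty}$ themselves:
\[
\int_{\partial\om_{\infty}^{R}}\bigl(\partial_n U_i^{\infty}\,U_j^{\infty}-U_i^{\infty}\,\partial_n U_j^{\infty}\bigr)\,d\sigma=0.
\]
The physical-boundary contributions again vanish, and the same decay estimates suppress the interaction of the evanescent remainders. The resulting algebraic identity for $i\neq j$ collapses to $S_{12}^{\infty}-S_{21}^{\infty}=0$, which is the symmetry of $\mathcal{S}^{\infty}$.

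I expect the main obstacle to be purely bookkeeping: keeping track of the signs of the outward normal on each piece of the truncation, of the orientation in the oscillatory factors $e^{\pm ikx}$ and $e^{\pm i\gamma y}$, and verifying that the normalisation constants $1/\sqrt{2k}$ and $1/\sqrt{\gamma\ell}$ combine to give exactly $\delta_{ij}$ on the right-hand side rather than some extra multiplicative factor. The exponential decay of $\tilde U_i^{\infty}$ in both channels (which follows from the fact that we are in the monomode regime $k\in(0;\pi)$ horizontally and below the next cut-off $\pi/\ell$ vertically) is what makes the $R\to+\infty$ passage routine rather than delicate.
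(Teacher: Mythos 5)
Your proposal is correct and is essentially the paper's own argument: the paper applies the same sesquilinear (for unitarity) and bilinear (for symmetry) Green/flux identity, merely evaluating it as a symplectic form on a fixed cross-section $\Sigma$, where the evanescent contributions cancel exactly, rather than via your truncation-and-limit $R\to+\infty$. One cosmetic slip: on the top section the remainder's decay rate is set by the second cutoff $3\pi/(2\ell)$ of the Dirichlet--Neumann vertical branch, not by $\pi/\ell=k$ (your exponent $\sqrt{(\pi/\ell)^2-k^2}$ vanishes), but this does not affect the argument.
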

\begin{proof}
Define the symplectic (sesquilinear and anti-hermitian ($q(\varphi,\psi)=-\overline{q(\psi,\varphi)}$)) form $q(\cdot,\cdot)$ such that for all $\varphi,\psi\in\mH^1_{\loc}(\om_\infty)$
\[
q(\varphi,\psi)=\int_{\Sigma} \cfrac{\partial \varphi}{\partial n}\overline{\psi}-\varphi\cfrac{\partial \overline{\psi}}{\partial n}\,d\sigma.
\]
Here $\Sigma:=\{-2\ell\}\times(0;1)\cup (-\ell;0)\times\{1+2\tau\}$, $\partial_n=-\partial_x$ on $\{-2\ell\}\times(0;1)$, $\partial_n=\partial_y$ on $(-\ell;0)\times\{1+2\tau\}$. Moreover, $\mH^1_{\loc}(\om_\infty)$ refers to the Sobolev space of functions $\varphi$ such that $\varphi|_{\mathcal{O}}\in\mH^1(\mathcal{O})$ for all bounded domains $\mathcal{O}\subset\om_\infty$. Integrating by parts and using that the functions $U^{\infty}_1$, $U^{\infty}_2$ satisfy the Helmholtz equation, we obtain $q(U^{\infty}_i,U^{\infty}_j)=0$ for $i,\,j\in\{1,2\}$. On the other hand, decomposing $U^{\infty}_1$, $U^{\infty}_2$ in Fourier series on $\Sigma$, we find 
\[
\begin{array}{c}
q(U^{\infty}_1,U^{\infty}_1) = (-1+|S^{\infty}_{11}|^2+|S^{\infty}_{12}|^2)\,i,\quad q(U^{\infty}_2,U^{\infty}_2) = (-1+|S^{\infty}_{22}|^2+|S^{\infty}_{21}|^2)\,i\\[5pt]
q(U^{\infty}_1,U^{\infty}_2)=-\overline{q(U^{\infty}_2,U^{\infty}_1)}=S^{\infty}_{11}\overline{S^{\infty}_{21}}+S^{\infty}_{12}\overline{S^{\infty}_{22}}.
\end{array}
\]
These relations allow us to prove that $\mathcal{S}^{\infty}\,\overline{\mathcal{S}^{\infty}}^{\top}=\mrm{Id}_{2\times 2}$, that is to conclude that $\mathcal{S}^{\infty}$ is unitary. On the other hand, one finds $q(U^{\infty}_1,\overline{U^{\infty}_2})=0=-S^{\infty}_{21}+S^{\infty}_{12}$. We deduce that $\mathcal{S}^{\infty}$ is symmetric.
\end{proof}

\bibliography{Bibli}

\def\cprime{$'$}
\begin{thebibliography}{10}

\bibitem{AlSE08}
A.~Al{\`u}, {M.G.} Silveirinha, and N.~Engheta.
\newblock Transmission-line analysis of $\varepsilon$-near-zero--filled narrow
  channels.
\newblock {\em Phys. Rev. E}, 78(1):016604, 2008.

\bibitem{AsPV00}
A.~Aslanyan, L.~Parnovski, and D.~Vassiliev.
\newblock Complex resonances in acoustic waveguides.
\newblock {\em Quart. J. Mech. Appl. Math.}, 53(3):429--447, 2000.

\bibitem{BoCNSu}
{Bonnet-Ben Dhia, A.-S.}, {L.} Chesnel, and {Nazarov, S.A.}
\newblock Non-scattering wavenumbers and far field invisibility for a finite
  set of incident/scattering directions.
\newblock {\em Inverse Problems}, 31(4):045006, 2015.

\bibitem{BoCNSub}
{Bonnet-Ben Dhia, A.-S.}, {L.} Chesnel, and {Nazarov, S.A.}
\newblock Perfect transmission invisibility for waveguides with sound hard
  walls.
\newblock {\em J. Math. Pures Appl.}, 111:79--105, 2018.

\bibitem{BLMN15}
{Bonnet-Ben Dhia, A.-S.}, E.~Lun{\'e}ville, Y.~Mbeutcha, and {Nazarov, S.A.}
\newblock A method to build non-scattering perturbations of two-dimensional
  acoustic waveguides.
\newblock {\em Math. Methods Appl. Sci.}, 2015.

\bibitem{BoNa13}
{Bonnet-Ben Dhia, A.-S.} and {Nazarov, S.A.}
\newblock Obstacles in acoustic waveguides becoming ``invisible'' at given
  frequencies.
\newblock {\em Acoust. Phys.}, 59(6):633--639, 2013.

\bibitem{ChHS15}
L.~Chesnel, N.~Hyv{\"o}nen, and S.~Staboulis.
\newblock Construction of indistinguishable conductivity perturbations for the
  point electrode model in electrical impedance tomography.
\newblock {\em SIAM J. Appl. Math.}, 75(5):2093--2109, 2015.

\bibitem{ChNa16}
L.~Chesnel and {Nazarov, S.A.}
\newblock Team organization may help swarms of flies to become invisible in
  closed waveguides.
\newblock {\em Inverse Problems and Imaging}, 10(4):977--1006, 2016.

\bibitem{ChNPSu}
{L.} Chesnel, {Nazarov, S.A.}, and V.~Pagneux.
\newblock Invisibility and perfect reflectivity in waveguides with finite
  length branches.
\newblock {\em arXiv preprint arXiv:1702.05007}, 2017.

\bibitem{ChPaSu}
{L.} Chesnel and V.~Pagneux.
\newblock Perfect reflection and trapped modes in waveguides with finite length
  branches.
\newblock {\em preprint}, 2018.

\bibitem{CPMP11}
{Cobelli, P.J.}, V.~Pagneux, A.~Maurel, and P.~Petitjeans.
\newblock Experimental study on water-wave trapped modes.
\newblock {\em J. Fluid Mech.}, 666:445--476, 2011.

\bibitem{DaPa98}
{E.B.} Davies and L.~Parnovski.
\newblock Trapped modes in acoustic waveguides.
\newblock {\em Q. J. Mech. Appl. Math.}, 51(3):477--492, 1998.

\bibitem{EASE09}
B.~Edwards, A.~Al{\`u}, {M.G.} Silveirinha, and N.~Engheta.
\newblock Reflectionless sharp bends and corners in waveguides using
  epsilon-near-zero effects.
\newblock {\em J. Appl. Phys.}, 105(4):044905, 2009.

\bibitem{Evan92}
{Evans, D.V.}
\newblock Trapped acoustic modes.
\newblock {\em IMA J. Appl. Math.}, 49(1):45--60, 1992.

\bibitem{EvLV94}
{Evans, D.V.}, M.~Levitin, and D.~Vassiliev.
\newblock Existence theorems for trapped modes.
\newblock {\em J. Fluid Mech.}, 261:21--31, 1994.

\bibitem{EvMP14}
{Evans, D.V.}, M.~McIver, and R.~Porter.
\newblock Transparency of structures in water waves.
\newblock In {\em Proceedings of 29th International Workshop on Water Waves and
  Floating Bodies}, 2014.

\bibitem{FlAl13}
R.~Fleury and A.~Al{\`u}.
\newblock Extraordinary sound transmission through density-near-zero
  ultranarrow channels.
\newblock {\em Phys. Rev. Lett.}, 111(5):055501, 2013.

\bibitem{FuXC14}
Y.~Fu, Y.~Xu, and H.~Chen.
\newblock Additional modes in a waveguide system of zero-index-metamaterials
  with defects.
\newblock {\em Scientific reports}, 4, 2014.

\bibitem{GPRO10}
{J.W.} Gonz{\'a}lez, M.~Pacheco, L.~Rosales, and {P.A.} Orellana.
\newblock Bound states in the continuum in graphene quantum dot structures.
\newblock {\em Europhys. Lett.}, 91(6):66001, 2010.

\bibitem{Henr74}
P.~Henrici.
\newblock {\em Applied and computational complex analysis}.
\newblock Wiley-Interscience, 1974.
\newblock Volume 1: Power series---integration---conformal mapping---location
  of zeros, Pure and Applied Mathematics.

\bibitem{HZSJS16}
{Hsu, C.W.}, B.~Zhen, {Stone, A.D.}, {Joannopoulos, J.D.}, and
  M.~Solja{\v{c}}i{\'c}.
\newblock Bound states in the continuum.
\newblock {\em Nat. Rev. Mater.}, 1:16048, 2016.

\bibitem{KaNa02}
{Kamotski{\u\i}, I.V.} and {Nazarov, S.A.}
\newblock An augmented scattering matrix and exponentially decreasing solutions
  of an elliptic problem in a cylindrical domain.
\newblock {\em Zap. Nauchn. Sem. S.-Peterburg. Otdel. Mat. Inst. Steklov.
  (POMI)}, 264(Mat. Vopr. Teor. Rasprostr. Voln. 29):66--82, 2000.
\newblock (English transl.: J. Math. Sci. 2002. V. 111, N 4. P. 3657--3666).

\bibitem{LiMc07}
{Linton, C.M.} and P.~McIver.
\newblock Embedded trapped modes in water waves and acoustics.
\newblock {\em Wave motion}, 45(1):16--29, 2007.

\bibitem{MaNP00}
{Maz'ya, V.G.}, {Nazarov, S.A.}, and {Plamenevski{\u\i}, B.A.}
\newblock {\em {Asymptotic theory of elliptic boundary value problems in
  singularly perturbed domains, Vol. 1}}.
\newblock {Birkh\"{a}user}, Basel, 2000.
\newblock Translated from the original German 1991 edition.

\bibitem{Mois09}
N.~Moiseyev.
\newblock {Suppression of Feshbach resonance widths in two-dimensional
  waveguides and quantum dots: a lower bound for the number of bound states in
  the continuum}.
\newblock {\em Phys. Rev. Lett.}, 102(16):167404, 2009.

\bibitem{Naza06}
{Nazarov, S.A.}
\newblock A criterion for the existence of decaying solutions in the problem on
  a resonator with a cylindrical waveguide.
\newblock {\em Funct. Anal. Appl.}, 40(2):97--107, 2006.

\bibitem{Naza10c}
{Nazarov, S.A.}
\newblock Sufficient conditions on the existence of trapped modes in problems
  of the linear theory of surface waves.
\newblock {\em J. Math. Sci.}, 167(5):713--725, 2010.

\bibitem{Naza11}
{Nazarov, S.A.}
\newblock {Asymptotic expansions of eigenvalues in the continuous spectrum of a
  regularly perturbed quantum waveguide}.
\newblock {\em {Theor. Math. Phys.}}, 167(2):606--627, 2011.

\bibitem{NaPl94bis}
{Nazarov, S.A.} and {B.A.} Plamenevski{\u\i}.
\newblock Selfadjoint elliptic problems: scattering and polarization operators
  on the edges of the boundary.
\newblock {\em Algebra i Analiz}, 6(4):157--186, 1994.
\newblock (English transl.: Sb. Math. J. 1995. V. 6, N 4. P. 839--863).

\bibitem{NaVi10}
{Nazarov, S.A.} and {J.H.} Videman.
\newblock Existence of edge waves along three-dimensional periodic structures.
\newblock {\em J. Fluid Mech.}, 659:225--246, 2010.

\bibitem{NgCH10}
{V.C.} Nguyen, L.~Chen, and K.~Halterman.
\newblock Total transmission and total reflection by zero index metamaterials
  with defects.
\newblock {\em Phys. Rev. Lett.}, 105(23):233908, 2010.

\bibitem{OuMP13}
A.~Ourir, A.~Maurel, and V.~Pagneux.
\newblock Tunneling of electromagnetic energy in multiple connected leads using
  $\varepsilon$-near-zero materials.
\newblock {\em Opt. Lett.}, 38(12):2092--2094, 2013.

\bibitem{Pagn13}
V.~Pagneux.
\newblock Trapped modes and edge resonances in acoustics and elasticity.
\newblock In {\em Dynamic Localization Phenomena in Elasticity, Acoustics and
  Electromagnetism}, pages 181--223. Springer Vienna, 2013.

\bibitem{PoNe14}
R.~Porter and {J.N.} Newman.
\newblock Cloaking of a vertical cylinder in waves using variable bathymetry.
\newblock {\em J. Fluid Mech.}, 750:124--143, 2014.

\bibitem{SaBR06}
{A.F.} Sadreev, {E.N.} Bulgakov, and I.~Rotter.
\newblock Bound states in the continuum in open quantum billiards with a
  variable shape.
\newblock {\em Phys. Rev. B}, 73(23):235342, 2006.

\bibitem{Urse51}
F.~Ursell.
\newblock Trapping modes in the theory of surface waves.
\newblock {\em {Proc. Camb. Philos. Soc.}}, 47:347--358, 1951.

\end{thebibliography}
\bibliographystyle{plain}
\end{document}